\newcommand{\R}{{{\mathbb {R}}}}
\newcommand{\C}{{{\mathbb C}}}
\newcommand{\Z}{{\mathbb Z}}
\newcommand{\U}{{\mathbb U}}
\newcommand{\HH}{{\mathbb H}}
\newcommand{\phii}{{\Theta_\phi}} 
\newcommand{\phiiu}{{\Theta_{\phi+u}}} 
\newcommand{\T}{{\mathcal T}}
\newtheorem{Theorem}{Theorem}
\newtheorem {lemma} [Theorem]    {Lemma}
\newtheorem {proposition}[Theorem]    {Proposition}
\newtheorem {theorem}[Theorem]    {Theorem}
\let\oldmarginpar\marginpar
\renewcommand\marginpar[1]{\-\oldmarginpar[\raggedleft\scriptsize #1]%
{\raggedright\scriptsize #1}}
\newcommand {\cal}{\mathcal}
\newcommand {\eps}{\varepsilon}
\begin{document}

\title{Decomposition of Brownian loop-soup clusters}
\author{Wei Qian}
\author{Wendelin Werner}
\address {
Department of Mathematics,
ETH Z\"urich, R\"amistr. 101,
8092 Z\"urich, Switzerland}
\email
{wei.qian@math.ethz.ch}
\email{wendelin.werner@math.ethz.ch}

\begin {abstract}
We study the structure of Brownian loop-soup clusters in two dimensions. 
Among other things, we obtain
the following decomposition of the clusters with critical intensity: When one conditions 
a loop-soup cluster by its outer boundary $\partial$ 
(which is known to be an SLE$_4$-type loop), then the union of all excursions away from $\partial$ by all 
the Brownian loops in the loop-soup that touch  $\partial$ is distributed exactly like the union of all excursions of a Poisson point process of Brownian excursions in the domain enclosed 
by $\partial$. 

A related result that we derive and use is that the couplings of the Gaussian Free Field (GFF) with CLE$_4$ via level-lines (by Miller-Sheffield), of the 
square of the GFF with loop-soups via occupation times (by Le Jan), and of the CLE$_4$ with 
loop-soups via loop-soup clusters (by Sheffield and Werner) 
can be made to coincide. An instrumental role in our proof of this fact is played by Lupu's description of CLE$_4$ as limits of discrete loop-soup clusters.  
\end {abstract}

\maketitle

\tableofcontents

\section{Introduction and background}

One main result of the present paper is a decomposition of Brownian loop-soup clusters in two dimensions and at their critical intensity. Roughly speaking, we shall see that when one 
conditions such a loop-soup cluster by its outer boundary, the union of the excursions away from this outer boundary by all the loops in the loop-soup that touch this boundary is distributed like the trace of a  Poisson point process of 
(therefore loosely speaking independent) Brownian excursions away from this outer boundary. 
This does provide a decomposition of these loop-soups that  sheds some new light on the relations between loop-soups, the Gaussian Free Field and the conformal loop ensemble (CLE) with parameter 4. 
Along the way, we shall in particular show that the three couplings between two of these three random objects that have been derived in earlier work can be made to commute.  
Before stating and discussing our results more precisely, let us first recall a few facts about CLEs and loop-soups:
\medbreak

\noindent 
{\bf Background on CLEs and Brownian loop-soup clusters.}
Recall that a simple CLE (as introduced and studied in \cite {Sh,ShW}) is a random countable collection $\Gamma$ of disjoint simple loops 
that are all contained in the unit disc $\U$. 
The law of $\Gamma$ is invariant under any conformal automorphism from
$\U$ onto itself (see \cite {ShW}) and therefore one can simply define the image of $\Gamma$ under any given conformal map from $\U$ onto some other domain $D$ to be a CLE in $D$.
CLEs are conjectured to be the scaling limit of discrete lattice-based models and they play a quite central role in the theory of two-dimensional conformally invariant structures, 
see for instance \cite {Wln} and the references therein. There is another related family of -- non-simple --  CLEs, where the loops are allowed to be non-simple \cite {Sh,MS3,MSW}, but we will not discuss those in the present paper.

Each simple CLE comes in two closely related variants: 
The non-nested version where no loop in this family is allowed to be surrounded by another one, and the nested CLEs where on the contrary, each given point is almost surely surrounded by infinitely 
many nested loops. Note that these two variants are two essentially equivalent objects (the law of the former can be obtained by the law of the latter and vice-versa: The outermost loops of a nested CLE form a non-nested CLE, and 
 a simple iterative procedure enables to define the distribution of a nested CLE out of the distribution of a simple CLE). 

The laws of CLEs can be characterized by conformal invariance and an additional natural simple condition that is described and discussed in \cite {ShW}. It turns out that there is only a one parameter family of simple CLE distributions (called the CLE$_\kappa$ for $\kappa \in (8/3, 4]$), and that there exist various equivalent ways to construct them: 
\begin {enumerate}
\item
As collections of outer boundaries of outermost clusters in Poissonian collections of Brownian loops in $D$ -- we will recall a few lines below how this construction goes (see \cite {ShW}). 
\item
Via variants of SLE$_\kappa$ processes:  
Conformal loop ensembles are in fact closely related to Schramm's SLE curves \cite {Sch}. Indeed, the loops in a CLE$_\kappa$ are in fact loop variants of SLE$_\kappa$,  see  \cite {Sh,ShW}. This relation can be made precise and enables to 
construct the CLEs via a SLE-based exploration tree  or via a Poisson point process of SLE bubbles (see \cite {Sh,ShW,WW}). 
\item
Via the Gaussian Free Field when $\kappa =4$: CLE$_4$ is also very directly and closely related to the Gaussian Free Field (referred to as GFF in the sequel), see \cite {MS,MS1,MS2,MS3} (or \cite {Wln,ASW} for short surveys). 
One can view the CLE$_4$ as being the family of ``level lines'' that one can deterministically read off from the GFF. 
The other CLEs can also be constructed from a GFF, but the GFF-CLE relation is less canonical in those cases (and in the present paper, we will only discuss aspects of the relation between 
CLE$_4$ and the GFF).
\end {enumerate}

The set of points in the unit disc that are not encircled by any CLE loop is a random fractal carpet with zero Lebesgue measure, and its Hausdorff dimension has been determined explicitly in terms of $\kappa$ (see \cite {SSW,NW}); the dimension is equal to 
is $1 + (2 / \kappa)  + (3 \kappa /32)$, so that the dimension of the CLE$_4$ carpet turns out to be $15/8$. Recall also that the Hausdorff dimension of SLE$_\kappa$ curves (and loops) is equal to $1 + (\kappa/8)$ (see \cite {Beffara,RS}). 

The present paper will mostly focus on some properties of the realization of CLE in term of outer boundaries of Brownian loop-soup clusters. 
Let us first briefly recall this loop-soup construction of CLEs and various other results from \cite {ShW}. 
One starts with a Poissonian collection of Brownian loops in $\U$ -- this is the Brownian loop-soup defined in \cite {LW}. Loosely speaking, Brownian loops appear independently at random in the unit disc, with an intensity given by a constant $c$ times  a very natural measure $\nu$ on (unrooted) Brownian loops.  In such a loop-soup, there will be only finitely many macroscopic loops (say, of diameter greater than any given $\delta$), but infinitely many small ones (of diameter smaller than $\delta$). These Brownian loops are not all disjoint (in fact any given Brownian loop will almost surely intersect infinitely many other loops of the loop-soup). 
The parameter $c$ describes the intensity of the loop-soup: The larger $c$ is, the more loops there are; for instance, a loop-soup with intensity $c=1$ is the union of two independent loop-soups with intensity $c=1/2$.

One then looks at clusters of Brownian loops (where two loops are in the same cluster if there is a finite chain of overlapping loops that allows to join them); when the intensity $c$ is not  too large, more precisely (see  \cite {ShW}) when $c \le 1$, if one uses the normalization of $\nu$ as in \cite {LW}, then there are several (in fact infinitely many) such loop-clusters. The collection of outermost outer boundaries of these clusters is then a collection of simple disjoint and non-nested loops, that turns out to be a non-nested simple CLE$_\kappa$, where 
the relation between the intensity $c \in (0,1]$ of the loop-soup and the $\kappa \in (8/3, 4]$ that describes the CLE$_\kappa$ is $c = c(\kappa)= (3 \kappa -8) (6 - \kappa) / (2 \kappa)$ (we will write $\kappa (c)$ for the inverse function). In particular, the CLE$_4$ that plays a special role in the present paper corresponds to the critical intensity $c=1$ (when $c >1$, all Brownian loops hook up into one dense cluster). 

Let us just mention to provide additional motivation that the CLE$_4$ carpet is conjectured to describe the scaling limit of critical $q$-Potts clusters for $q=4$, and that the combination of various recent results show that CLE$_3$ carpets (corresponding to $c=1/2$) describe the scaling limit of critical Ising clusters (see \cite {MSW} and the references therein). 

\begin{figure}[ht]
\begin{center}
\includegraphics[scale=.7]{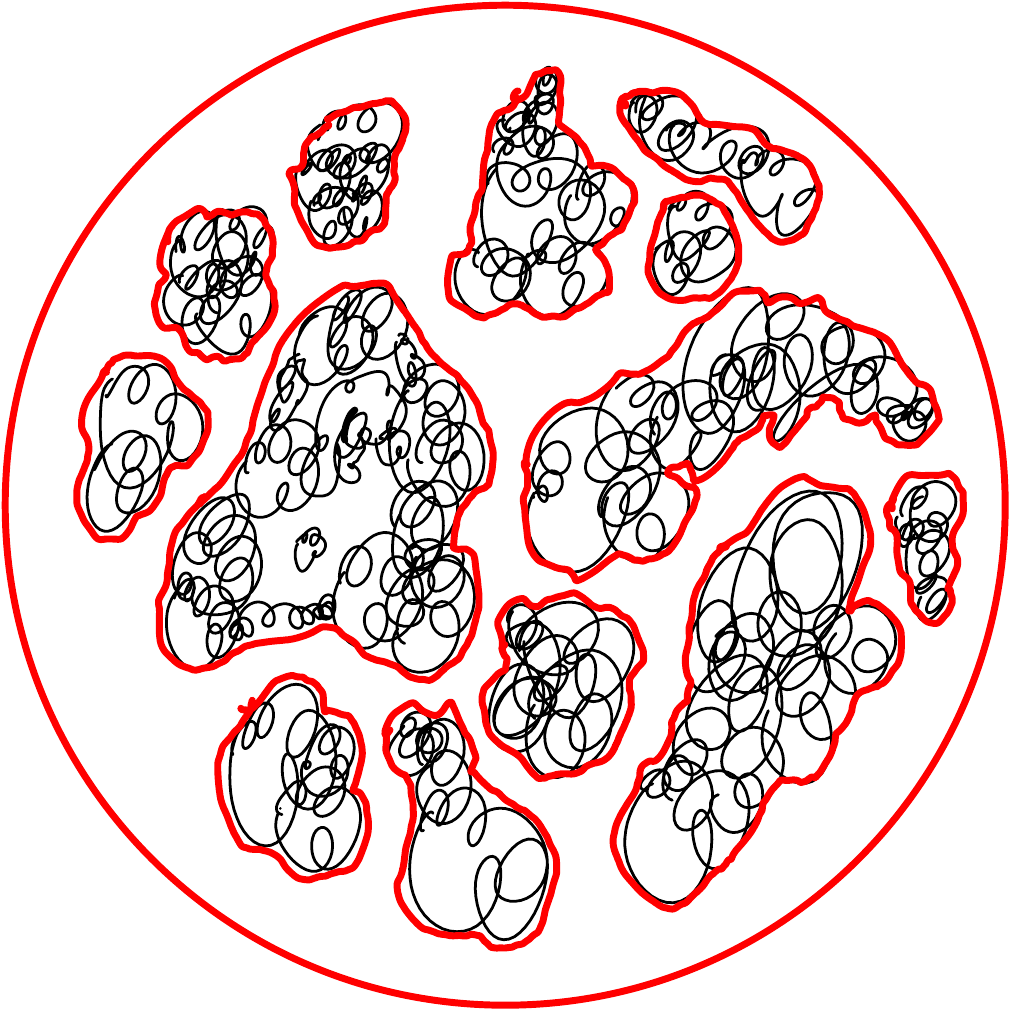}
\end{center}
\caption{\label{fig0}Sketch of a loop-soup. The outermost outer boundaries of clusters of loops form a CLE.}
\end{figure}

\medbreak
\noindent
{\bf Decomposition of the clusters.}
We can note already that there are two types of points on the outer boundary of a loop-soup cluster i.e. of a CLE$_\kappa$ loop.
Those that were part of a Brownian loop (and in fact necessarily on the outer boundary of that Brownian loop), and those that are only reachable via an infinite chain of loops in the loop-soup cluster (i.e. they are in the closure of the cluster but not in the cluster itself). It is not difficult to see that both these types of points do exist: the former  
exists because adding a large single Brownian loop to a given loop-soup creates a configuration that is absolutely continuous with that of a loop-soup (just because of the properties of 
Poisson point processes). And this additional loop may indeed connect two different outermost clusters together, so that a path in the original CLE$_\kappa$ carpet 
that was touching none of the original CLE$_\kappa$ loops and was previously separating two clusters, would now have to hit a point that is both on the additional Brownian loop and on the outer boundary of the newly created cluster. The existence of the latter type of points for instance follows from the fact that the Hausdorff dimension of the outer boundary of the CLE$_\kappa$ loop (which is a SLE$_\kappa$ loop for $\kappa> 8/3$) is strictly larger than the Hausdorff dimension of the outer boundary of one Brownian loop (i.e. of an 
SLE$_{8/3}$ loop). 

Before stating our decomposition result for loop-soup clusters, we first need to briefly recall the definition of the Brownian excursion measure in a simply connected domain $D$. Just as the Brownian loop-measure in $D$ is the  natural (and in two dimensions, conformally invariant) measure on Brownian loops that stay in $D$, the Brownian excursion measure $\mu_D$ is the natural and conformally invariant measure on Brownian paths in $D$ that start and end on $\partial D$ with non-prescribed endpoints. For instance, in the unit disc, one can view it (up to a multiplicative normalizing constant) as the limit when $\eps \to 0$ of $1/\eps$ times the law of a Brownian motion started uniformly on the circle of radius $\exp (-\eps)$ and stopped upon exiting $\U$.  Just as the loop measure, the excursion measure is an infinite and conformally invariant measure, so that one can define it in any simply connected domain $D$
as the conformal image of this measure in $\U$ (the fact that $\partial D$ may be a fractal curve is therefore no problem).  It is also easy to define similarly the excursion measure in finitely connected domains. 

These excursion measures have been used in the context of restriction properties (see \cite {LW2,LW1,LSWr,Wcrrq}). For instance  (see \cite {Wcrrq}), when one uses the appropriate normalization of $\mu$ (we will come back to this normalization question later in this paper) which is the normalization that we will refer to in the sequel, 
a Poisson point process with intensity $\beta \mu$ of excursions in the upper half-plane that start and end on the negative half-line 
will satisfy one-sided restriction with exponent $\beta$ as defined in  \cite {LSWr}.

We are now ready to state 
the following  decomposition of loop-soup clusters for $c\le 1$. Let us already stress that the main point in this theorem is its very last statement.

\begin {theorem}
\label {mainthm}
Consider a Brownian loop-soup $\Lambda$ with intensity $c \in (0,1]$ in the unit disc $\U$, and consider the collection  $\Gamma=(\gamma_j, j \in J)$ of all the outer boundaries $\gamma_j$ of its outermost loop-soup clusters $K_j$. We know from \cite {ShW} that this is a CLE$_\kappa$ for $\kappa = \kappa (c)$.  
Define for all $j$, the interior $O_j$ of the loop $\gamma_j$ to be the bounded connected component of $\C \setminus \gamma_j$.
Then: 
\begin {itemize}
 \item  Conditionally on $\Gamma$, the collections $(\Lambda \cap \overline O_j)$ for $j \in J$ are independent of each other. 
 \item  Furthermore, conditionally on $\Gamma$, for each $j$, the conditional distribution of 
$\Lambda \cap \overline O_j$ in $\overline O_j$ is conformally invariant. In other words, if we define any $\Gamma$-measurable conformal maps $\psi_j$ from $O_j$ onto $\U$, then 
the law of $\psi_j ( \Lambda \cap \overline O_j )$ does in fact not depend on $\Gamma$. 
One can also decompose this family of loops $\Lambda \cap \overline O_j$ into two (conditionally) independent parts: 
 \begin {enumerate}
  \item A Brownian loop-soup with intensity $c$ in $O_j$ (these are the Brownian loops that do not touch the outer boundary $\gamma_j$ of the cluster).
  \item A collection of loops in $\overline O_j$ that do all touch $\gamma_j = \partial O_j$.
 \end {enumerate}
 \item In the special case where $c=1$, conditionally on $\gamma_j$, the union of $\gamma_j$ with the collection of loops in $\overline O_j$ that touch $\gamma_j$, is distributed like 
 the union of $\gamma_j$ with a Poisson point process of Brownian excursions in $O_j$ with intensity $1/4$.
 \end{itemize}
\end {theorem}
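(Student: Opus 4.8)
The plan is to deduce the $c=1$ statement from the structural results already established (the conformal invariance of the conditional law of $\Lambda \cap \overline{O_j}$ and its splitting into an interior loop-soup and a boundary-touching part) together with the identification, announced in the abstract, of the three couplings between the loop-soup, the GFF and CLE$_4$. By the conformal invariance and conditional independence from the previous two bullets, it suffices to work in $\U$ and to identify, conditionally on a single loop $\gamma=\gamma_j$ with interior $O=O_j$, the law of the union of the excursions into $O$ made by the loops touching $\gamma$. The idea is to realize $\Lambda$ jointly with a Gaussian free field $\phi$ in $\U$ in such a way that (i) the outer boundaries of the outermost clusters are exactly the level lines of $\phi$, so that $\Gamma$ is the associated CLE$_4$, and (ii) Le Jan's isomorphism holds, i.e. the occupation field of $\Lambda$ equals $\tfrac12\phi^2$. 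That these two couplings can be imposed simultaneously is precisely the coincidence of couplings proved earlier in the paper, whose proof rests on Lupu's metric-graph description of CLE$_4$.

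Granting this joint coupling, I would argue by subtraction at the level of occupation fields. Conditionally on $\gamma$ and on the sign label of its cluster (which by symmetry I may take to be $+$), the restriction of $\phi$ to $O$ is $2\lambda+\tilde\phi$, where $\tilde\phi$ is a zero-boundary GFF in $O$ and $2\lambda$ is the height gap of the level lines. Since no loop of $\Lambda$ crosses $\gamma$ (any loop touching $\gamma$ belongs to the cluster and hence lies in $\overline{O}$), the occupation field of $\Lambda$ inside $O$ is the sum of the occupation field of the interior loop-soup and that of the boundary-touching loops. Applying Le Jan's isomorphism to the interior loop-soup — which by the second bullet is an independent $c=1$ loop-soup in $O$, and which the coincidence of couplings identifies with $\tilde\phi$ — its occupation field is $\tfrac12\tilde\phi^2$. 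Subtracting, the occupation field of the boundary-touching excursions equals $\tfrac12(2\lambda+\tilde\phi)^2-\tfrac12\tilde\phi^2 = 2\lambda\,\tilde\phi+2\lambda^2$, read off jointly with $\tilde\phi$.

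To turn this into the announced path-level statement, I would invoke a two-dimensional analogue of the generalized second Ray--Knight theorem: the field $2\lambda\,\tilde\phi+2\lambda^2$ is exactly the occupation field of a Poisson point process of Brownian excursions in $O$ emanating from $\partial O$, coupled to $\tilde\phi$, with an intensity fixed by the height $2\lambda$. Rather than establish this identity directly in the continuum, the natural route is to run the whole argument on Lupu's metric graph, where the loop-soup clusters are the sign clusters of the cable GFF, the decomposition above is the discrete generalized Ray--Knight identity, and the ``excursions'' are genuine excursions of the random walk away from the zero set — which by It\^o excursion theory form a Poisson point process already at the path level. One then passes to the scaling limit: the discrete cluster boundaries converge to the CLE$_4$ loop $\gamma$ and the discrete excursions to a Poisson point process of Brownian excursions, so that the union of the boundary-touching excursions converges to the union of a Poisson point process of Brownian excursions in $O$. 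A separate bookkeeping of the normalizations of $\nu$, of $\lambda^2$ and of the excursion measure $\mu$ — the normalization question flagged earlier in the paper — is what pins the resulting intensity to the value $1/4$.

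The main obstacle I anticipate is precisely this last, path-level step. The occupation-field computation only determines a scalar field and does not by itself see how the excursions are organized geometrically; promoting it to an identity of unions of excursions requires the path-level form of the isomorphism, and hence genuine control of the scaling limit — simultaneously of the loop-soup clusters to CLE$_4$, of the random-walk excursions to Brownian excursions, and of their joint coupling to the field. The delicate structural input is the coincidence of the three couplings, namely that the field $\tilde\phi$ produced by Le Jan's isomorphism for the interior loop-soup is the very same as the restriction of the global field $\phi$; without this the subtraction step is not legitimate. Finally, matching all normalization conventions so that the critical intensity $c=1$, the height gap $2\lambda$ and the excursion measure $\mu$ combine to give exactly intensity $1/4$ is a routine but error-prone computation that I would carry out only at the end.
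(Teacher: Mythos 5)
Your architecture matches the paper's for the hard (third) bullet: the same joint coupling via Lupu's cable systems (cluster boundaries $=$ level lines, occupation field $=$ squared GFF), the same height-gap input $u=2\lambda$, an occupation-field comparison, a Dynkin/Ray--Knight identity, and a final normalization computation pinning $\beta=1/4$. (You take the first two bullets as given; in the paper they are not free — they require the restriction-property argument and, more delicately, the loop-removal/resampling argument of Lemma \ref{l3} showing that the interior loops form an honest loop-soup independent of $\gamma$ and of the boundary-touching loops. That independence is exactly what licenses any subtraction or cancellation later, so it cannot be bracketed off.) The genuine problems are in how you close the argument, and there are two.

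First, your subtraction is performed pointwise in a single coupling: subtracting $\frac12\tilde\phi^2$ from $\frac12(2\lambda+\tilde\phi)^2$ presupposes that the interior loop-soup's occupation field equals $\frac12\tilde\phi^2$ for the \emph{very same} field $\tilde\phi=\phi|_{O}-2\lambda$. As you yourself flag, the commuting-couplings statement does not give this: it identifies the global occupation field with $\phii$ and the first-layer cluster boundaries with level lines, but provides no coupled interior Le Jan isomorphism inside each $O_j$. The paper's fix, which you are missing, stays entirely at the level of conditional laws: conditionally on $\gamma$, the recentered squared field inside $O(\gamma)$ has two representations as a sum of \emph{independent} pieces --- (independent squared GFF) plus the recentered occupation field $\tilde\T^b$ of the boundary-touching loops (Section \ref{S2} decomposition plus Le Jan in law), and (independent squared GFF) plus $\hat\T_{ku^2}$ with $u=2\lambda$ (the continuum Dynkin isomorphism, Proposition \ref{p2}, quoted from Sznitman rather than re-derived) --- and one then cancels the common squared-GFF factor in the characteristic functions on the dense set of $t$ where $E(\exp(it\phii(f)))\neq 0$, using linearity in $f$ to identify all finite-dimensional marginals. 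Second, you misdiagnose the final step: the theorem asserts only equality in law of the \emph{union} of the boundary-touching loops with the union of a Poisson point process of excursions, and the trace of such a union and its occupation field determine each other (the trace is the support of the field; conversely the field is recovered from the trace via the $\eps$-neighbourhood-area limit, citing Le Gall). So the field-level identity in law already yields the stated theorem, and no path-level Ray--Knight isomorphism or joint scaling limit of metric-graph excursions is needed; indeed the paper explicitly defers the path-level upgrade (that the excursions form a genuine Poisson point process) to future work. Your proposed program for that upgrade is unexecuted and substantially harder than what the statement requires, while the step you dismiss as "only determining a scalar field" is precisely the paper's mechanism for concluding.
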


\begin{figure}[ht!]
\begin{center}
\includegraphics[scale=1]{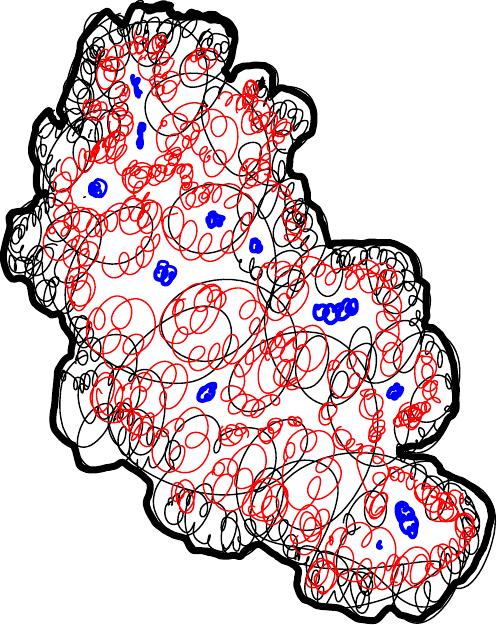}
\includegraphics[scale=1]{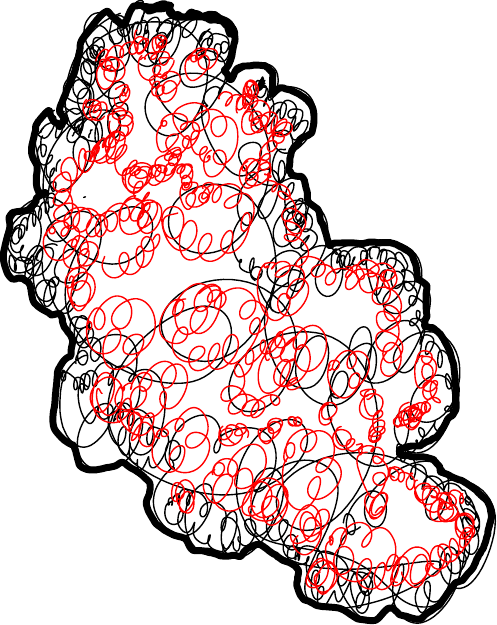}
\includegraphics[scale=1]{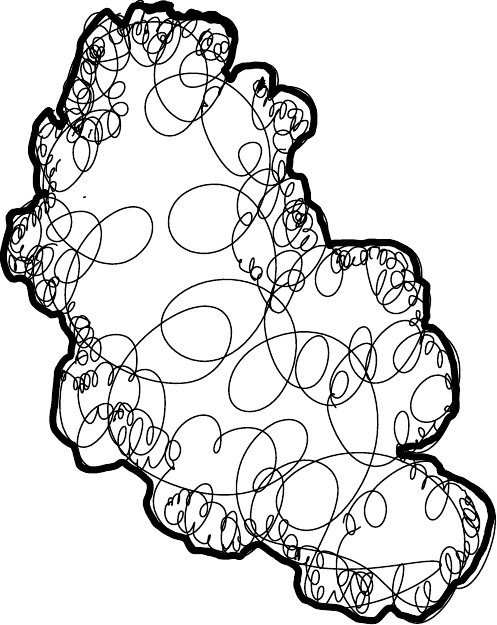}
\end{center}
\caption{\label{figa}Sketch of the whole loop-soup inside an outermost boundary, of the loop-soup cluster itself, and of only the collection of Brownian loops in the cluster that touch its outer boundary.}
\end{figure}

Let us repeat that the main and arguably fairly surprising part of this theorem is its very last one, which is specific to the $c=1$ case.  Given that the excursions away from $\gamma_j$ in the loop-soup do come from  loops, it is somewhat counter-intuitive that they turn out to be related to a Poisson point process of (therefore loosely speaking independently sampled) excursions 
(conditionally on  $\gamma_j$). 

One can reformulate the previous result in the $c=1$ case as a decomposition of the outermost critical clusters $K_j$. Indeed, this decomposition  shows that it is possible to sample the loop-soup clusters in $\U$ as follows: 
\begin {itemize} 
 \item First sample a CLE$_4$ $\Gamma= (\gamma_j, j \in J)$ (this  will turn out to be eventually the family of outer boundaries of outermost clusters of the loop-soup). This family then defines 
 the family of sets $(O_j)$. 
 \item 
 Then, we treat each domain $O_j$ independently, and sample a Poisson point process of excursions with intensity $1/4$ in $O_j$, and independently a loop-soup $\Lambda_j$ in $O_j$. We then define $K_j$ to be the union of $\gamma_j$ with the excursions and all the $\Lambda_j$-loop-soup clusters that touch these excursions. The remaining clusters of the loop-soup (that do not touch 
 any of the excursions) are denoted by $(K_{j,k}, k \in I_j)$. 
\end {itemize}
Then, the obtained family $(K_j, j \in J)$ is distributed exactly like the family of outermost loop-soup clusters of a loop-soup in $\U$, and the families $(K_{j,k}, k \in I_j)$ correspond to the non-outermost loop-soup clusters that are hidden inside of $K_j$.

\medbreak
\noindent
{\bf Commuting GFF/CLE$_4$/Loop-soup couplings.} In the case $c=1$, Theorem \ref {mainthm} and its proof are related to and connect a number of earlier ideas and results: The aforementioned constructions of CLE$_4$ from loop-soups \cite {ShW}, the relation between loop-soups and the square of the GFF \cite {LJ}, the relation between CLE$_4$ and the GFF first pointed out by Miller and Sheffield \cite {MS},
the restriction property type ideas from \cite {LSWr, W, Wcrrq,WW2},  the relation between CLE$_4$ and  the GFF that follows from \cite {LJ} and the
recent results of Titus Lupu \cite {Titus}, and Dynkin's isomorphism theorem \`a la Le Jan-Sznitman (see \cite {Sz} and the reference therein). 
Let us recall briefly the three couplings that have been shown to relate CLE$_4$ with the GFF, the GFF with loop-soups and loop-soups with CLE$_4$ respectively: 
\begin {itemize}
 \item 
Appropriately renormalized cumulative occupation times of Brownian loop-soups define an (appropriately renormalized) squared GFF. This is due to Le Jan \cite {LJ}. Recently, \cite {Titus0}
has also shown (in the discrete counterpart) how to sample the GFF itself (i.e. its sign, given its square) using the loop-soups. This results is a rather direct consequence of the definitions 
of the loop-soups and of the GFF (and it is in fact valid in any dimension, and for any graph). 
\item 
The CLE$_4$ conformal loop ensemble can be coupled with a GFF, in such a way that the CLE$_4$ loops are the ``level lines'' of the GFF. 
This has been first pointed out by Miller and Sheffield \cite {MS}, building on earlier work of Schramm-Sheffield \cite {SchSh2} and Dub\'edat \cite {Dub}
that showed how to couple SLE$_4$ with a GFF. See for instance \cite {Wgff,ASW} for brief reviews of this coupling. This coupling is based on some
natural martingales associated to the SLE$_4$ processes, and that correspond to the progressive discovery of the GFF.  
\item 
As already mentioned, CLE$_4$ loops can be viewed as outer boundaries of loop-soup clusters, as shown in Sheffield-Werner \cite {ShW}. The proof of this fact is related to restriction property ideas.  
\end {itemize}
The proofs of these three couplings have been quite independent, and they used fairly disjoint ideas and techniques. 
One main result of the present paper is that, as schematically shown in Figure \ref {fig3},  these three couplings can be made to commute. This result is in fact instrumental in our derivation of Theorem \ref {mainthm}. 

\begin{figure}[ht]
\begin{center}
\includegraphics[width=11cm]{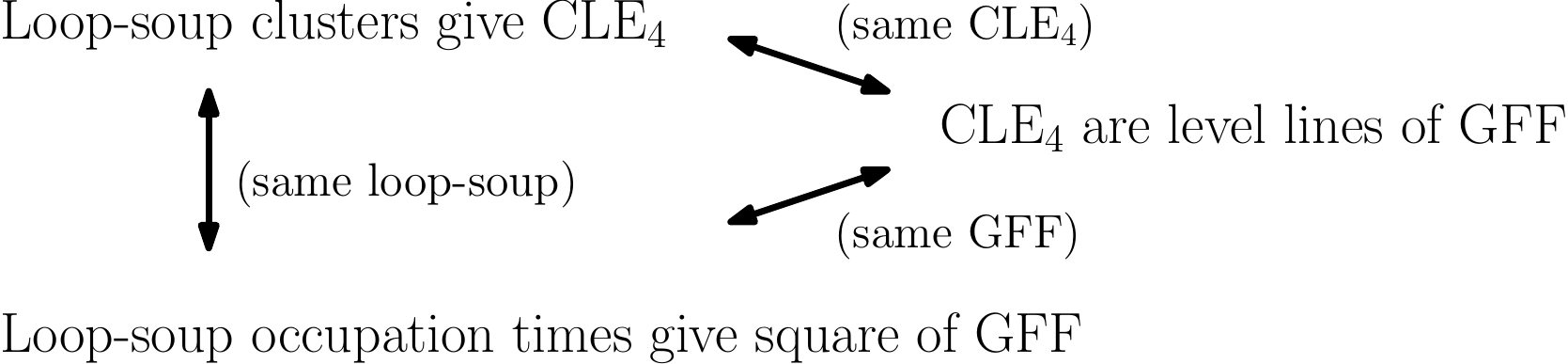}
\end{center}
\caption{\label{fig3}The three couplings can be made to coincide.}
\end{figure}

\medbreak

This  paper is structured as follows: 
We will first derive the first statements of Theorem \ref {mainthm} in Section \ref {S2}. In Section \ref {S3}, we will derive the commutation feature illustrated in Figure \ref {fig3}. 
In this proof, we will make an important use of Lupu's approach to CLE's via loop-soups on cable-systems \cite {Titus0,Titus}. In Section \ref {S4}, we then conclude the proof of Theorem \ref{mainthm},
using this commutation feature fact and Dynkin's isomorphism theorem.  
We then conclude in Section \ref {S5} with various remarks and comments.

\section {Conditioning the loop-soup on the outermost cluster boundaries}
\label {S2}
In this section, we consider $c \in (0,1]$ and all the statements will be valid in this general case.  The value $\kappa (c)  \in (8/3, 4 ]$ is defined as in the introduction. 

Let us consider a loop-soup $\Lambda$ with intensity $c$ in the unit disc, and focus first on the outer boundary $\gamma = \gamma (0)$ of the outermost loop-soup cluster that surrounds the origin.  We denote by $O(\gamma)$ the bounded connected component of the complement of $\gamma$ in the plane, and we define the conformal map $\psi$ from  $O(\gamma)$ into $\U$
such $\psi (0) =0$ and $\psi' (0)$ is a positive real number. Recall (see \cite {ShW}) that $\gamma$ is a continuous self-avoiding loop, so that $\psi$ can be extended to a bijection from the closure of $O (\gamma)$ to the closure $\overline \U$ of the unit disc (that defines a one-to-one correspondence between $\gamma$ and the unit circle). 

Let us define $\Lambda_0$ to be the loop-soup restricted to $\overline {O(\gamma)}$, i.e. the collection of all the loops of $\Lambda$ that are inside $O(\gamma)$ and those that intersect  $\gamma$. 
The collection of loops $\tilde \Lambda := \psi (\Lambda_0)$ is therefore a collection of (Brownian-type) loops in the closed unit disc.

Our first step is to prove the following Lemma: 

\begin {lemma} 
\label {l1}
The loop $\gamma$ and the collection $\tilde \Lambda$ are independent. 
\end {lemma}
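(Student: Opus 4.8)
The plan is to prove the stronger statement that the conditional law of $\tilde\Lambda$ given $\gamma$ does not depend on $\gamma$, which is equivalent to the asserted independence. The two ingredients are the conformal invariance of the Brownian loop measure $\nu$ and the Poissonian (restriction) structure of the loop-soup: for a \emph{deterministic} domain $D\subset\U$, the loops of $\Lambda$ contained in $D$ and those not contained in $D$ are restrictions of the Poisson point process to two disjoint classes of loops, hence independent, and by conformal invariance of $\nu$ the former is a loop-soup of intensity $c$ in $D$ which, under any conformal map $D\to\U$, becomes a standard intensity-$c$ loop-soup in $\U$.

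First I would fix a candidate boundary curve $\eta$ surrounding $0$ and analyse the conditional law of $\Lambda_0$ on the event $\{\gamma=\eta\}$ (making this rigorous by conditioning on $\gamma$ lying in a small neighbourhood of $\eta$ and passing to the limit). The key point is to write $\{\gamma=\eta\}$ as the intersection of two events: an \emph{inner} event, that the loops of $\Lambda$ contained in $\overline{O(\eta)}$ produce a cluster of the origin whose outer boundary is exactly $\eta$; and an \emph{outer} event, that no loop of $\Lambda$ leaving $O(\eta)$ attaches itself to this cluster (for otherwise the cluster, and hence its outer boundary, would extend beyond $\eta$). Given the deterministic domain $O(\eta)$, the inner event depends only on the loops in $\overline{O(\eta)}$, while the outer event depends on the exterior loops and on the cluster $K$; since the two loop classes are independent, integrating out the exterior loops turns the outer constraint into a reweighting of the law of $\Lambda_0$ by the non-merging probability
\[
W(\eta,K)=\exp\!\Big(-c\,\nu\big(\{\text{loops in }\U\text{ meeting both }K\text{ and }\U\setminus\overline{O(\eta)}\}\big)\Big).
\]

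I would then transport everything to $\U$ through $\psi$. By conformal invariance of $\nu$, the law of the inner loop-soup becomes the standard intensity-$c$ loop-soup in $\U$, and the inner event becomes the $\gamma$-free event that the origin-cluster has outer boundary $\partial\U$; both are independent of $\eta$. The whole problem thus reduces to showing that $W(\eta,K)$ factorises as a product of a function of $\eta$ alone and a conformally invariant function of the normalised cluster $\tilde K=\psi(K)$. Once this is established, the $\eta$-only factor is absorbed into the marginal law of $\gamma$, while the $\tilde K$-factor reweights the law of $\tilde\Lambda$ in an $\eta$-independent way, so that the joint law of $(\gamma,\tilde\Lambda)$ factorises and independence follows.

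The main obstacle is precisely this last factorisation of $W(\eta,K)$. The loops responsible for it join $K\subset\overline{O(\eta)}$ to the exterior of $O(\eta)$ and therefore cross $\gamma=\partial O(\eta)$, so they are not acted on by $\psi$ and naive conformal invariance does not apply directly. The heart of the proof will be a conformal-restriction computation for the Brownian loop measure, expressing this mass in terms of the boundary contact behaviour of $K$ along $\gamma$ and isolating its dependence on $\tilde K$; I expect the locality and conformal covariance of $\nu$ near the boundary to yield the required product form. The subsidiary technical points—the rigorous meaning of conditioning on the continuously distributed curve $\gamma$, and the measurability of $K$ and of $\psi$ as functions of $\Gamma$—are expected to be routine by comparison.
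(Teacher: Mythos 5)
There is a genuine gap, and it sits exactly where you flagged it. Your central object $W(\eta,K)=\exp\bigl(-c\,\nu(\{\ell:\ \ell\cap K\neq\emptyset,\ \ell\not\subset\overline{O(\eta)}\})\bigr)$ is degenerate: on your inner event, the cluster $K$ contains Brownian loops that actually touch $\eta$ (the paper points out that such boundary-contact points exist), so $K$ and $\U\setminus\overline{O(\eta)}$ are at zero distance, and the $\nu$-mass of loops meeting both is infinite --- at infinitely many dyadic scales around a contact point, both a macroscopic piece of a cluster loop and the exterior region are present, each scale contributing a non-summable amount to the crossing mass. Hence $W(\eta,K)=0$ identically, which is just the reflection of the fact that $\{\gamma=\eta\}$ has probability zero; your proposed remedy of conditioning on a neighbourhood of $\eta$ and passing to the limit would require a renormalization of this vanishing weight that you do not specify. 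Moreover, even for a regularized version, the quantity $\nu(\{\ell \hbox{ meets both } A \hbox{ and } B\})$ is precisely the \emph{non-factorizing} joint functional that appears in conformal restriction covariances, so the product form $W(\eta,K)=f(\eta)\,g(\tilde K)$ that you defer to ``a conformal-restriction computation'' is not something locality and conformal covariance of $\nu$ will deliver; in this approach that factorization is essentially equivalent to the lemma itself, so the reduction is close to circular. (A smaller repairable omission: $\gamma$ is the \emph{outermost} cluster boundary surrounding the origin, so your outer event must also exclude exterior clusters surrounding $\eta$; that factor happens to depend on $\eta$ alone.)

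The idea you are missing --- and the paper's entire proof --- is to avoid zero-probability conditioning altogether by conditioning on the positive-probability events $\{\gamma\subset U\}$ for deterministic simply connected $U\ni 0$, and invoking the exploration (Markov/restriction) property of loop-soup clusters from Sheffield--Werner: one discovers all clusters \emph{not} contained in $U$, and conditionally on the discovered set, the undiscovered loops form a fresh loop-soup in the remaining random domain $U_0$, whose image under the normalized conformal map $\psi_0:U_0\to\U$ is again a standard loop-soup in $\U$. Since $\psi=\tilde\psi\circ\psi_0$ for the corresponding map $\tilde\psi$ of the image configuration, the conditional law of $\tilde\Lambda$ given $\{\gamma\subset U\}$ equals its unconditional law for every such $U$, and as these events generate the $\sigma$-field of $\gamma$, independence follows with no density or loop-mass computation at all. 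Your instinct about conformal invariance plus the Poissonian structure is right, but it must be applied at the level of this cluster-discovery Markov property, not through an exponential non-merging weight attached to a fixed curve.
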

In other words, the conditional distribution of $\Lambda_0$ given $\gamma$ is the conformal image of a random independent configuration $\tilde \Lambda$ in the closed unit disc via $\psi^{-1}$. 
This result will be based on  the conformal invariance and the restriction property of the loop-soup, in the spirit of some of the ideas used in \cite {ShW}: 

\begin {proof}
  Let us consider a simply connected subset $U$ of $\U$ that contains the origin. Let us discover all loop-soup clusters that do not fully stay in $U$, consider the 
interior of the complement of the union of all these clusters, and define $U_0$ to be the connected component of this set that does contain the origin and $\psi_0$ the conformal map from $U_0$ onto $\U$ with 
$\psi_0 (0)=0$ and $\psi_0' (0) > 0$. The restriction property of the loop-soup shows immediately that conditionally on $\gamma \subset U$, the law of $\psi_0 ( \Lambda_0)$ is equal to the original (non-conditioned) law of $\Lambda_0$. In particular, this implies that the conditional law of $\tilde \Lambda$ given $\gamma \subset U$ is equal to the unconditional law of $\tilde \Lambda$. This implies readily that 
$\gamma$ and $\tilde \Lambda$ are independent. 
\end {proof}

Actually, the very same argument (just conditioning on all the loop-soup clusters that intersect $U$ instead of conditioning on $\gamma$) shows that  $\tilde \Lambda$ is independent of 
the entire loop-soup in the exterior of $\gamma$, which yields the following result (recall also that the loop-soup is conformally invariant, so that the origin plays no particular role): 
\begin {lemma}
 \label {l2}
 Conditionally on the entire collection  $\Gamma= (\gamma_j, j \in J)$ of outermost boundaries of outermost clusters,
 the families $(\Lambda \cap \overline O_j)$ for $j \in J$ are (conditionally) independent of each other.  
\end {lemma}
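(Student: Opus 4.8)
The plan is to upgrade the restriction-property argument of Lemma~\ref{l1}, conditioning not on the single event $\{\gamma\subset U\}$ but on the whole loop-soup configuration lying outside one of the clusters. By conformal invariance of the loop-soup the origin plays no distinguished role, so it suffices to fix the outermost cluster $K_{j_0}$ surrounding $0$ and to prove that $\Lambda\cap\overline{O_{j_0}}$ is, conditionally on $\Gamma$, independent of the collection of all the other families $(\Lambda\cap\overline{O_{j'}})_{j'\neq j_0}$; full mutual conditional independence of the countable family $(\Lambda\cap\overline{O_j})_{j\in J}$ then follows by the standard factorization argument of peeling off one index at a time. Keep the notation $\Lambda_0=\Lambda\cap\overline{O_{j_0}}$ and $\tilde\Lambda=\psi(\Lambda_0)$ from the previous proof, and let $\mathcal E$ denote the \emph{exterior loop-soup}, i.e.\ the collection of all loops of $\Lambda$ not contained in $\overline{O_{j_0}}$. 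Since every loop of $\Lambda$ lies in some $\overline{O_j}$, the datum $\mathcal E$ determines both $(\gamma_{j'})_{j'\neq j_0}$ and $(\Lambda\cap\overline{O_{j'}})_{j'\neq j_0}$, and together with $\gamma_{j_0}$ it determines $\Gamma$. Thus the whole statement reduces to showing that $\Lambda_0$ is conditionally independent of $\mathcal E$ given $\gamma_{j_0}$.

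For this I would run the argument of Lemma~\ref{l1} while remembering more. Fix a simply connected $U\ni 0$ for which $\{\gamma_{j_0}\subset U\}$ has positive probability, discover all loop-soup clusters that do not stay inside $U$ (on this event none of them is $K_{j_0}$) together with their exact shapes, let $U_0$ be the component containing $0$ of the interior of the complement of their union, and let $\psi_0\colon U_0\to\U$ be the normalized conformal map. The Poissonian restriction property of the loop-soup then gives that, conditionally on the discovered clusters, the loops of $\Lambda$ that remain in $U_0$ form a fresh loop-soup in $U_0$, independent of the discovered data. On $\{\gamma_{j_0}\subset U\}$ one has $\overline{O_{j_0}}\subset U_0$, so $K_{j_0}$ is exactly the origin-cluster of this fresh loop-soup, and both $\gamma_{j_0}$ and $\Lambda_0$ are measurable functions of it; since the standardization $\Lambda_0\mapsto\tilde\Lambda$ is intrinsic (invariant under $\psi_0$), Lemma~\ref{l1} applied inside $U_0$ yields that, conditionally on the discovered clusters and on $\gamma_{j_0}$, the interior $\tilde\Lambda$ has its universal law. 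In particular the conditional law of $\Lambda_0$ given the discovered clusters and $\gamma_{j_0}$ depends only on $\gamma_{j_0}$. Letting $U$ range over a family of simply connected neighbourhoods of $0$ rich enough that the discovered clusters generate $\sigma(\mathcal E)$ while leaving $\gamma_{j_0}$ undisturbed, this upgrades to the desired $\Lambda_0\perp\mathcal E\mid\gamma_{j_0}$.

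It then remains to convert this into the assertion of the lemma. Because $\sigma(\Gamma)\subset\sigma(\gamma_{j_0},\mathcal E)$ while the remaining interiors are $\sigma(\mathcal E)$-measurable, a short conditioning computation turns $\Lambda_0\perp\mathcal E\mid\gamma_{j_0}$ into $\Lambda_0\perp(\Lambda\cap\overline{O_{j'}})_{j'\neq j_0}\mid\Gamma$: conditioning on $\gamma_{j_0}$ already decouples $\Lambda_0$ from everything recorded by $\mathcal E$, and refining $\gamma_{j_0}$ up to $\Gamma$ only adds further $\mathcal E$-measurable information, which $\Lambda_0$ is insensitive to. Repeating the argument with $j_0$ replaced by an arbitrary $j\in J$, and then invoking the peeling argument, completes the proof.

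The step I expect to be the genuine obstacle is the first one: conditioning on the \emph{entire} exterior configuration rather than on a positive-probability event, and certifying that the loops surviving in $U_0$ really do form an independent fresh loop-soup. The delicate point is that the discovered objects are whole clusters, not individual loops, so one must check that this discovery is a bona fide conditioning under which the Poissonian restriction property of the loop-soup still applies, and then organise the family of sets $U$ so that the discovered clusters generate $\sigma(\mathcal E)$ without ever revealing $K_{j_0}$ itself. Once this is in place, the measure-theoretic bookkeeping relating $\sigma(\gamma_{j_0},\mathcal E)$ to $\sigma(\Gamma)$ is entirely routine.
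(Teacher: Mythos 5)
Your proposal is correct and takes essentially the same route as the paper: the paper proves Lemma \ref{l2} in a single sentence by rerunning the proof of Lemma \ref{l1} while conditioning on the full configuration of discovered clusters (those not staying in $U$) rather than merely on the event $\{\gamma \subset U\}$, concluding that $\tilde\Lambda$ is independent of the entire loop-soup exterior to $\gamma$, and then invoking conformal invariance so that the origin plays no special role. Your reduction to $\Lambda_0 \perp \mathcal{E} \mid \gamma_{j_0}$, the ranging over a rich countable family of sets $U$, and the final peeling step are just a spelled-out version of that same argument (and the technical point you flag is exactly what the paper absorbs into the phrase ``the very same argument'').
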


Let us now go back to the description of the law of $\Lambda_0$ given $\gamma$. It seems tempting to claim that the family of loops inside of $\gamma$ that do not touch $\gamma$ is distributed like a loop-soup in $O(\gamma)$, because when one discovers $\gamma$ from the outside, one has no information about those loops (this observation has also been pointed out to us by David Wilson \cite {Wilson}). 
This fact turns out to be correct, but one has to be a little careful because of the following caveat: 
Let us call $\Lambda_0^b$ and $\Lambda_0^i$ the collection of loops in $\Lambda_0$ that respectively touch the boundary $\gamma$ and stay in
the open set $O(\gamma)$. Define $\tilde \Lambda^b$ and $\tilde \Lambda^i$ to be their respective image under $\psi$.
Then, it could happen (and as we shall point out towards the end of the paper, this is indeed the case at least for $c=1$)
that the loops of $\Lambda_0^b$ (or equivalently of $\tilde \Lambda^b$) alone do almost surely not form a single cluster (see Fig. \ref {donothook}):  They create a countable collection of disjoint clusters, even if the outer boundary of their closure is equal to $\gamma$. In other words, the loops of $\Lambda_0^b$ ``need'' the contribution of those of $\Lambda_0^i$ in order to form the single cluster that will have $\gamma$ as its outer boundary. 

\begin{figure}[ht!]
\begin{center}
\includegraphics[scale=.6]{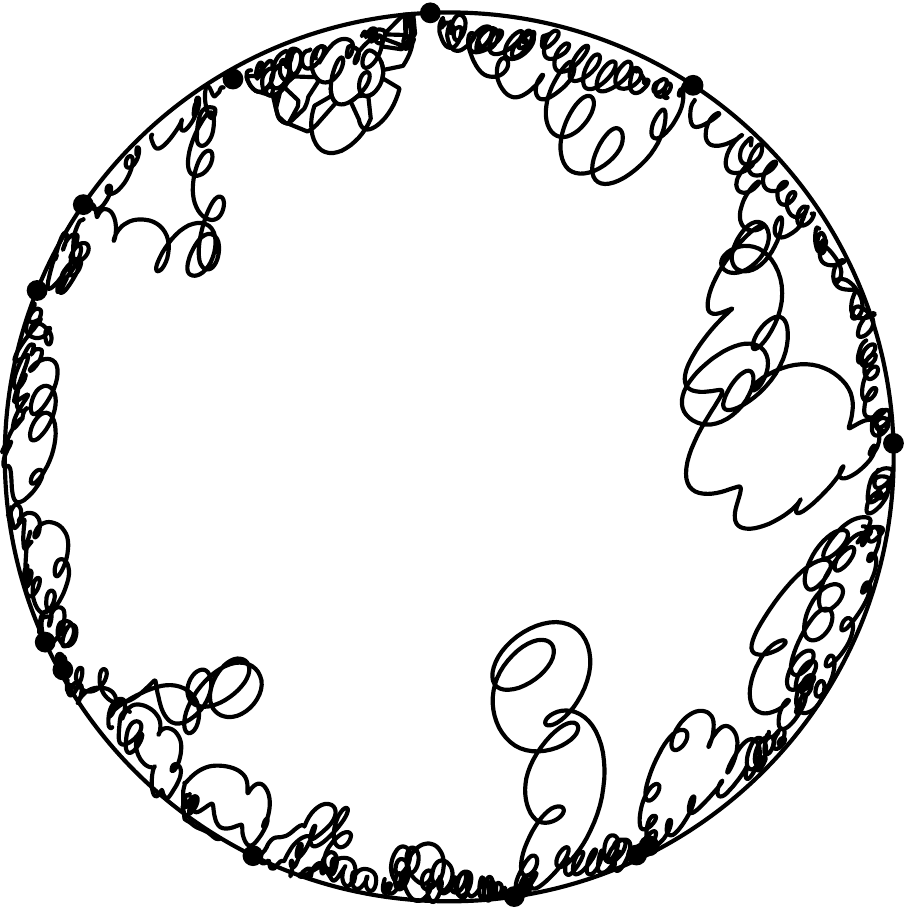}
\caption {\label {donothook} The loops of $\tilde \Lambda^b$ do not hook up into a single cluster (the dots separate clusters)}
\end{center}
\end {figure}

However, as we shall now explain, it turns out that if one considers 
 the union of $\tilde \Lambda^b$ with an independent loop-soup in the unit disc, the obtained configuration does almost surely hook up the loops of $\tilde \Lambda^b$ into a single cluster, which will prove 
 the following statement, for all $c \in (0,1]$:
\begin {lemma}
\label {l3}
The two processes $\tilde \Lambda^i$ and $\tilde \Lambda^b$ are independent. Furthermore, the former is distributed like a Brownian loop-soup in the unit disc.
\end {lemma}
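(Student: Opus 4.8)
The plan is to combine the independence already obtained in Lemma~\ref{l1} with a conditional description of the interior loops together with an almost-sure connectivity statement for loop-soups. By Lemma~\ref{l1}, the configuration $\tilde\Lambda$ in $\overline\U$ is independent of $\gamma$, so it suffices to analyse the joint law of its boundary-touching part $\tilde\Lambda^b$ (the loops of $\tilde\Lambda$ that meet $\partial\U$) and its interior part $\tilde\Lambda^i$ (the loops that stay in the open disc), and to show that the latter is a loop-soup independent of the former. Equivalently, working in $O(\gamma)$ before applying $\psi$, I would describe the conditional law of the interior loops $\Lambda_0^i$ given the pair $(\gamma,\Lambda_0^b)$. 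The starting observation is that $\gamma$ is the outer boundary of the closure of $\Lambda_0^b$ (the interior loops, being surrounded, do not affect it), so that the domain $O(\gamma)$ is a measurable function of $\Lambda_0^b$ alone.

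First I would argue, using the restriction property of the loop-soup in the spirit of Lemma~\ref{l1}, that conditionally on $(\gamma,\Lambda_0^b)$ the family $\Lambda_0^i$ is distributed like a Brownian loop-soup of intensity $c$ in $O(\gamma)$, conditioned on the single event
$$ E=\{\text{the loops of }\Lambda_0^b\text{ all belong to one cluster of }\Lambda_0^b\cup\Lambda_0^i\}. $$
Indeed, for a given outermost-cluster boundary $\gamma$ and given outermost loops $\Lambda_0^b$, the loops that stay strictly inside $O(\gamma)$ carry no further constraint coming from the exterior; the only way in which the event ``$\gamma$ is the outer boundary of the outermost cluster around $0$'' involves them is through the requirement that they glue the (a priori several) clusters of $\Lambda_0^b$ into a single one, whose outer boundary is then automatically $\gamma$. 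Any interior configuration achieving this gives back a consistent sample, and adding further interior loops never destroys the property, so $E$ is increasing and the conditional law is a free loop-soup in $O(\gamma)$ conditioned on $E$. Making this fully rigorous amounts to a disintegration over the random curve $\gamma$, since for a fixed curve the loop-soup almost surely has no loop touching it; this is the point where the singular nature of $\Lambda_0^b$ has to be handled with care.

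The crux is therefore to prove that $E$ has conditional probability one, i.e.\ that an independent loop-soup in $O(\gamma)$ almost surely hooks the loops of $\Lambda_0^b$ into a single cluster; equivalently, after transporting by $\psi$, that $\tilde\Lambda^b$ together with an independent loop-soup in $\U$ forms a single cluster. I expect this connectivity statement to be the main obstacle. The mechanism I would use is that the at most countably many clusters of $\tilde\Lambda^b$ alone are separated by pinch points, and that two clusters sharing such a point $p$ have traces coming within arbitrarily small distance of each other near $p$. At such a point one then looks at the dyadic annuli $B(p,2^{-n})\setminus B(p,2^{-n-1})$: in each of them there is a uniformly positive chance that the independent loop-soup contains a loop joining the two approaching clusters, and since loops contained in disjoint annuli are independent (again by restriction), a Borel--Cantelli argument across the infinitely many scales forces such a connection almost surely. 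It is essential here that we only need to bridge sets whose distance tends to $0$, which is possible for every $c>0$, rather than to percolate across a macroscopic gap, which would fail for $c\le 1$. Carrying out the per-scale estimate and verifying the geometric claim that adjacent clusters approach a common pinch point are the technical points that remain to be nailed down.

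Finally, once $P(E)=1$ is established, the conditioning on $E$ is trivial, so conditionally on $(\gamma,\Lambda_0^b)$ the interior loops $\Lambda_0^i$ form an unconditioned loop-soup of intensity $c$ in $O(\gamma)$ whose law depends only on $\gamma$. Transporting by the conformal map $\psi$ and using the conformal invariance of the loop-soup, $\tilde\Lambda^i$ is a loop-soup of intensity $c$ in $\U$ whose conditional law given $(\gamma,\tilde\Lambda^b)$ does not depend on $(\gamma,\tilde\Lambda^b)$. This yields simultaneously that $\tilde\Lambda^i$ is independent of $\tilde\Lambda^b$ and that it is distributed like a Brownian loop-soup in the unit disc, as claimed. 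As a by-product, the fact that the clusters of $\tilde\Lambda^b$ are genuinely distinct, illustrated in Figure~\ref{donothook}, is exactly the failure of connectivity for $\tilde\Lambda^b$ on its own, which the added interior loop-soup repairs.
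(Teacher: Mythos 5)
Your reduction matches the paper's framing of the problem---the crux is indeed to show that an independent loop-soup almost surely hooks the clusters of $\tilde\Lambda^b$ into a single cluster---but the two steps you explicitly defer are the entire difficulty, and as proposed they do not go through. First, the disintegration: conditioning on the pair $(\gamma,\Lambda_0^b)$ is conditioning on a singular, probability-zero event, and you offer no device to make ``a free loop-soup in $O(\gamma)$ conditioned on $E$'' rigorous; you flag this but do not resolve it. Second, and more seriously, the pinch-point/Borel--Cantelli mechanism is flawed in two ways. (i) The points separating the clusters of $\tilde\Lambda^b$ lie \emph{on the unit circle} (the dots in Figure \ref{donothook}), so your dyadic annuli around such a point $p$ are only half-annuli inside $\U$: a loop of the added soup cannot wind around $p$, so the standard ``non-contractible loop in the annulus meets every crossing'' trick is unavailable, and a uniformly positive per-scale connection probability would require both clusters to \emph{cross} each half-annulus at scale-comparable distance --- a quantitative geometric control you have not established (the clusters could approach $p$ tangentially or through only some scales). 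Without a uniform lower bound, positivity of each $p_n$ does not give $\sum_n p_n=\infty$. (ii) It is not established, and not clear, that two distinct clusters of $\tilde\Lambda^b$ share \emph{any} common accumulation point: countably many clusters arranged along the circle can accumulate on one another in a Cantor-like fashion with no two of them ``adjacent'', in which case pairwise bridging at pinch points produces no chain joining everything. So the key connectivity statement remains unproved.

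The paper avoids both obstacles by a quite different argument: it never conditions on $\Lambda_0^b$ and never constructs connections by hand. It fixes a deterministic annulus $A$ and a compact set $A_0$ in its middle hole, and on the positive-probability event that $\gamma$ surrounds $A$, it deletes the (a.s.\ finitely many, by local finiteness of the soup) loops lying in $A_0$ one at a time, ordered by diameter. Using that both the loop-soup and the CLE are locally finite, and that any two clusters of the deleted configuration $\Lambda(t)$ --- whose law is absolutely continuous with respect to that of $\Lambda$ --- are at positive distance from each other, it shows that removing one interior loop cannot split the cluster with outer boundary $\gamma$ into several pieces touching $\gamma$ (neither into infinitely many, by local finiteness, nor into finitely many, by the positive-distance property). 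Hence resampling the soup in $A_0$ preserves the event that $\gamma$ is an outermost cluster boundary; since $A$, $A_0$ and the diameter cutoff $\delta$ are arbitrary and deterministic, this identifies the conditional law of $\tilde\Lambda^i$ directly and yields simultaneously the loop-soup law and the independence, with the hook-up property falling out as a corollary rather than serving as an input. If you want to salvage your route, this deletion/absolute-continuity argument is the ingredient that must replace your Borel--Cantelli step.
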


\begin {proof}
 We know a priori that the Brownian loop-soup is locally finite (i.e. for each $\eps>0$, only finitely many loops have diameter greater than $\eps$) and that the same is true for the CLE$_\kappa$ (see \cite {ShW}). 
 It follows that if we decompose the 
 loops of $\Lambda_0^b$ into clusters, then only finitely many of them will reach a distance greater than $\eps$ from $\gamma$ (as each 
 of them would contain at least one Brownian loop with diameter at least $\eps$). 
 
 Let us now consider any deterministic annular region $A \subset \U$ and a set $A_0$ inside the ``middle hole'' of $A$,  such that the distance between $A$ and $A_0$ is greater than $\eps$. We are going to follow the following procedure: We first sample a loop-soup $\Lambda$ in $\U$. 
 Then, we are going to let the Brownian loops in $A_0$ disappear one by one in the order determined by their diameter size: After time $t$, all the loops of $\Lambda$ with diameter greater than $t$ that are in $A_0$ did disappear and we call $\Lambda (t)$ the obtained collection of loops. 
 Note that (because $A_0$ and $t$ are deterministic and the distribution of the number of loops of diameter greater than $t$ in $A_0$ follows a Poisson random variable) the law of $\Lambda(t)$ is absolutely continuous with respect to that of $\Lambda$.

\begin{figure}[ht!]
\begin{center}
\includegraphics[scale=.6]{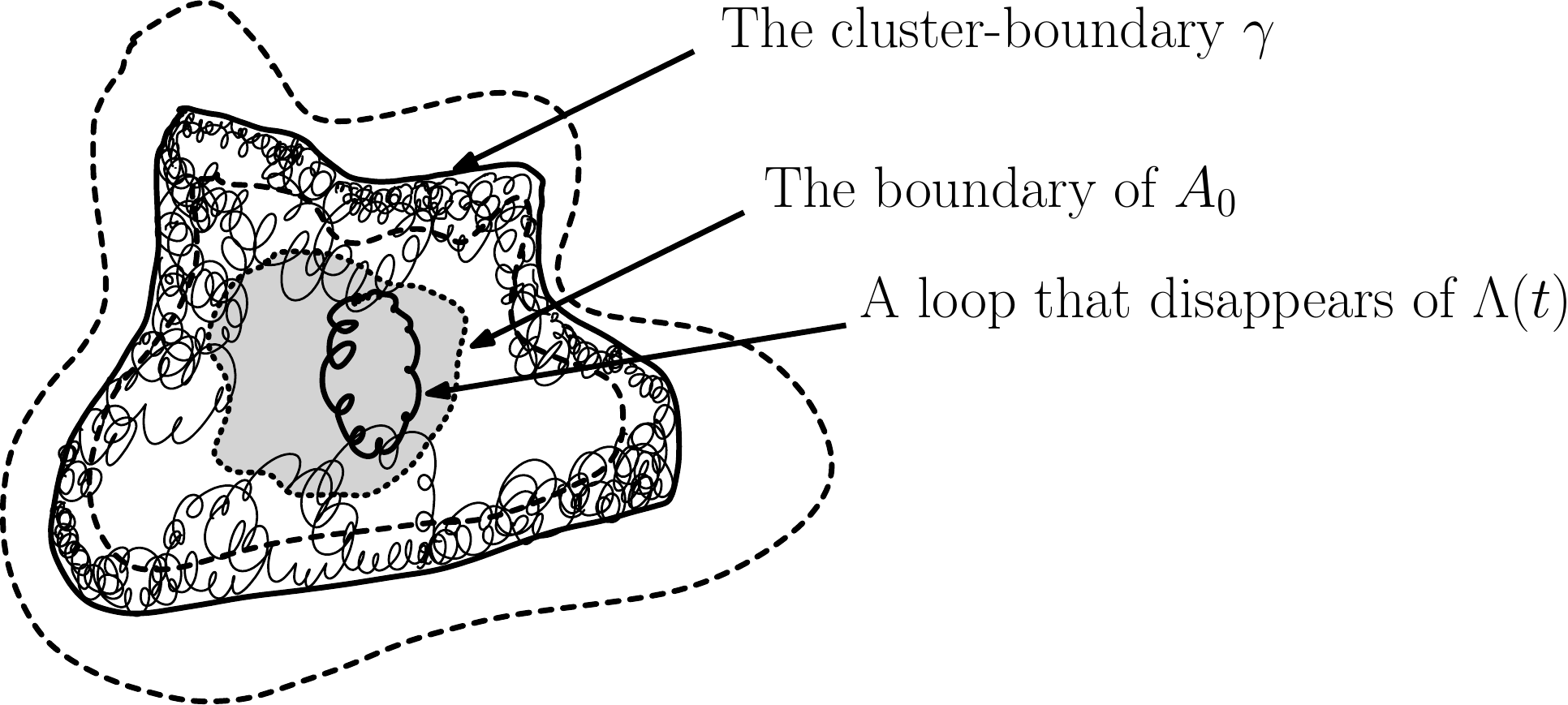}
\caption {The annular region $A$ in-between the dashed loops, the shaded set $A_0$, the loop-soup cluster and its outer boundary $\gamma$.
\label {A0pic}}
\end{center}
\end {figure}
 
Suppose that for $\Lambda$, one observes that $\gamma$ goes around the annular region $A$, like in Figure \ref {A0pic}. Then, we claim that $\gamma$ is still the outer boundary of a loop-soup cluster of $\Lambda (t)$. Indeed, if it wasn't the case, then it means that when removing one of the finitely many loops from $\Lambda$ in order to obtain $\Lambda (t)$, one has disconnected a cluster that had $\gamma$ as its outer boundary into several loop-soup clusters, none of which having the whole of $\gamma$ as its outer boundary. The fact that the loop-soup and the CLE are locally finite implies that it is not possible to split the loop-soup cluster containing $\gamma$ into {\em infinitely} many parts that all touch $\gamma$ by removing just one loop that is at positive distance of $\gamma$. On the other hand, if we would have split the cluster into finitely many parts, because any two loop-soup clusters in $\Lambda(t)$ are at positive distance from each other (recall 
that the law of $\Lambda(t)$ is absolutely continuous with respect to that of $\Lambda$), the intersection of the clusters of $\Lambda(t)$ with $\gamma$ are also at positive distance from 
each other, which leads to a contradiction if several of these clusters do touch $\gamma$. We can therefore conclude that almost surely, for all $t$, there is a loop-soup cluster of $\Lambda (t)$ that has all of $\gamma$ as its outer boundary. 

From this, it follows that on the event where $\gamma$ goes around $A$, resampling the loop-soup in $A_0$ does not change the event that $\gamma$ is the outer boundary of a loop-soup cluster. Since this is true for all deterministic $A$ and $A_0$, we can conclude that for all $\delta$, conditionally on $\gamma$, the law of $\tilde \Lambda^i$ restricted to those loops of diameter at least $\delta$ is that of a loop-soup in $\U$ restricted to those loops of diameter at least $\delta$. Since this law 
is independent of $\gamma$, and that this is true for all $\delta$, we conclude that $\tilde \Lambda^i$ is a Brownian loop-soup in $\U$, which is independent of $\gamma$. This concludes the proof of the lemma.
\end {proof}

This result shows that when observing the trace of all the loops of the loop-soup in an annular region, one can already detect all the outer boundaries of clusters that stay in this annular region (even if the corresponding cluster does not stay in this region). This type of result is reminiscent of the theory of local sets for the GFF developed by Schramm and Sheffield in \cite {SchSh} (the outer boundaries of clusters can be viewed as ``local sets'' of the loop-soup; see \cite {Wresampling} for related simple observations).

\medbreak

We have now decomposed the outermost cluster that surrounds the origin (and all other loops that are contained within it) via three independent inputs: The outer boundary loop $\gamma$, the collection $\tilde \Lambda^b$ and the collection $\tilde \Lambda^i$;  Lemma \ref {l1}, Lemma \ref {l2} and Lemma \ref {l3} imply the first two items in the theorem. It remains to prove the 
last item, which is the description of the  loops that touch $\gamma$ in terms of a Poisson point process of excursions in the special case where $c=1$. This will be the goal of the coming two sections.

\medbreak

Let us conclude this section with  the following remarks: 
Given the CLE$_\kappa$ loops $(\gamma_j)$ consisting of all the outer boundaries of outermost loop-soup clusters, 
the collections  $\Lambda_j := (\Lambda \cap \overline O_j)$ are conditionally independent, and each of their individual law is described as above, so that they in particular contain each a loop-soup $\Lambda_j^i$ in $O_j$. 
This enables to iterate the procedure and to use the clusters of loops of the loop-soup $\Lambda_j^i$ to define a next layer of CLE$_\kappa$ loops inside each $\gamma_j$. In this way, we construct indeed in a deterministic way an entire nested (non-labelled) CLE$_\kappa$ out of a single Brownian loop-soup $\Lambda$ of intensity $c$. But one can note that $\Lambda$ contains strictly more information than this nested CLE$_\kappa$, because quite a lot of information about the loop-soup is  not used in this construction of the nested CLE$_\kappa$: The collection $\tilde \Lambda^b$ is for instance independent of the nested CLE$_\kappa$ constructed in this way. 

\section {CLE$_4$ / GFF / square of GFF / loop-soup couplings} 
\label {S3}

In this section and in the next one, we will suppose that $c=1$, and $\kappa$ will be equal to $4$.
Let us first recall and review  some facts about the coupling between CLE$_4$ (labelled or not labelled), the GFF (and its square) and Brownian loop-soups.

\medbreak

{\sl (a) CLE$_4$ and the GFF.}
Recall that one can define deterministically a CLE$_4$ out of a GFF in $D$ as the family of its level lines. In this way, each loop $\gamma$ of the CLE$_4$ comes equipped with a sign $\eps (\gamma)$ in $\{ +, - \}$, that describes whether this level line is an upward step or downward step i.e. a jump of $2\lambda $ or $-2 \lambda$ where $\lambda := \sqrt { \pi / 8}$, when one moves from the outside of the loop to its inside and one chooses the normalization of the GFF as in \cite {SchSh}. We will use this result in our arguments. This coupling between CLE$_4$ and the GFF, and the fact that the GFF determines the CLE$_4$ and the labels (using the fact that SLE$_4$ is determined by a GFF with appropriate $\pm \lambda$ boundary conditions \cite {SchSh,SchSh2,Dub}, and an absolute continuity argument) is due to 
Miller and Sheffield \cite {MS}. One written proof can be found in \cite {ASW}. 

Recall that conversely, conditionally on the first level CLE$_4$, the labels are i.i.d. and that, given the labelled CLE$_4$, the distribution of the GFF inside each loop $\gamma$ is that of independent GFFs in each loop to which one adds the constants $2\eps (\gamma) \lambda$.   It is also possible to recover deterministically the GFF from the entire nested labelled CLE$_4$. The labelled nested CLE$_4$ and the GFF are therefore two equivalent objects.

Note that there is another nice relation between (unlabelled this time)  CLE$_4$ and the GFF \cite {SWW}, but we will not study it in the present paper.

\medbreak

{\sl (b) The square of the GFF.}
The GFF  $\phi$ being a generalized function, some care is needed when one wants to define its square $\phii$ (usually denoted by $:\! \phi^2 \! :$).
This is however a standard procedure, for instance via the 
language of Gaussian processes and Wick products (see for instance \cite {LJ} and the references therein for background).
Let us make a few simple comments. This (renormalized) squared GFF $\phii $  can be defined as a random generalized function with zero expectation  (i.e., for each smooth function $f$, the 
random variable $\phii  (f)$ has zero mean); it is not a non-negative generalized function even if it is called a square. 
We will give the formula for the characteristic function of $\phii (f)$ in the next paragraph (this gives another description of the law of the process $\phii$).
One concrete way to define $\phii$ is that 
if $B_r(x)$ denotes the ball of radius $r$ around $x$ and $\phi( B_r (x))$ denotes the integral of $\phi$ over this ball, then $\phii(f)$ is the limit in probability as $r \to 0$, of 
$$  \int \frac{1}{\pi r^2}\left( \phi(B_r(x))^2-E[\phi(B_r(x))^2]\right) f(x) dx.$$
This shows in particular that the square of the GFF is a deterministic function of the GFF itself
(mind however that some information about $\phi$ is lost when one just observes $\phii$ and that one cannot deterministically recover the GFF when one knows its square).  
The previous representation of the GFF via a labelled nested CLE$_4$ therefore induces a coupling between the square of the GFF and non-labelled non-nested CLE$_4$ (just keep the first CLE$_4$ layer of the nested labelled CLE$_4$ and forget its label). One can note that (because resampling the sign of the GFF inside each of the CLE$_4$ loops will not change its square) in this coupling of CLE$_4$ with $\phi$ and $\phii $, the non-labelled CLE$_4$ and the square of the GFF are both independent of the labels $\eps (\gamma)$. 
\medbreak

{\sl (c) The square of the GFF and the Brownian loop-soup.}
As pointed out by Le Jan \cite {LJ}, the (renormalized) occupation time measure of the Brownian loop-soup for $c=1$ is distributed exactly as the (renormalized) square of the GFF. 
In order to define the renormalized occupation field of the loop-soup, one can for instance consider the loop-soup $\Lambda(\eps)$ that consists of all loops of $\Lambda$ with time-length at least $\eps$. This is a Poisson point process of Brownian loops, where almost surely, $\Lambda (\eps)$ contains only finitely many loops. Hence, for any non-empty open set $O$ in $\U$, one can define the occupation time $T_\eps (O)$ to be the total time spent in $O$ by all loops of $\Lambda (\eps)$. The corresponding quantity for $\Lambda$ is easily shown to be almost surely infinite (due to the large number of small loops in $O$). However, one can define the limit (in $L^2$, for instance)
$$
T (O) := \lim_{\eps \to 0} [ T_\eps (O) - E ( T_\eps (O) ) ] .
$$
By definition, this limit has zero expectation and it can take negative values. We will call this field $T$ the renormalized occupation field of the loop-soup.
Then (see \cite {LJ}),  $T$ is distributed like $\phii $ (up to a given multiplicative normalizing  constant depending on the chosen normalization for $\phi$). Note that the representation of
 the characteristic function 
of $T (f)$ (when $f$ is a smooth test function with support at positive distance from the boundary of the disc) in terms of the Brownian loop-measure $\nu$ given by
\begin {equation*}
E\bigl( \exp ( i t T (f) ) \bigr) = \exp \bigr( \int  \nu (dl) ( e^{it T(f,l)} - 1 - it T(f,l) ) \bigr)
\end {equation*}
is immediate from the definition of the Poisson point process of loops (here $T(f,l)$ denotes the integral of $f$ along the loop $l$). The renormalization/recentering in the definition of the field $T$ corresponds to introducing the $-it T(f,l)$ term on the right-hand side, that 
ensures that the integral with respect to $\nu$ is convergent (without this term, the contribution of the small loops to this integral diverges).

\medbreak

{\sl (d) CLE$_4$ and the Brownian loop-soup.}
As already discussed in the previous section, it has been proved in \cite {ShW} that the collection of outer boundaries of outermost loop-soup clusters is distributed like a non-nested CLE$_4$. 
Combined with the previous item (c) that shows that this loop-soup defines the square of a GFF, this provides a coupling of $\phii $ with the first layer of a CLE$_4$.

\medbreak

This raises naturally the question whether the couplings between CLE$_4$ and the square of the GFF that are defined via the GFF (by taking the first level lines encountered in the GFF used to define the square of the GFF) and via the Brownian loop-soup (by taking the outermost boundaries of clusters of the loop-soups used to define the square of the GFF) can be made to coincide. The following statement gives a positive answer to this question, and is the main result of the present section:

\begin {proposition}
\label {p1}
One can couple a Brownian loop-soup for $c=1$ with a GFF $\phi$, in such a way that:
\begin {itemize}
 \item The first layer of the CLE$_4$ loops defined by the first level lines of $\phi$ is exactly the outer boundary of the outermost loop-soup clusters.
 \item The renormalized occupation time measure $T$ of the loop-soup is exactly (a constant multiple of)  $\phii$.
\end {itemize}
\end {proposition}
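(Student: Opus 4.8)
\section*{Proof proposal}

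The plan is to produce the coupling in the discrete (cable-graph) setting, where all three objects automatically live on the same probability space, and then to pass to the scaling limit. First I would fix a sequence of fine lattice approximations of $\U$ and work on their cable systems (metric graphs). On each approximation one has simultaneously a cable-graph Gaussian free field $\phi_n$ and a Brownian loop-soup $\Lambda_n$ at the relevant critical intensity, coupled through the (cable-graph) Le Jan isomorphism: up to a fixed normalizing constant, the occupation field of $\Lambda_n$ is a deterministic multiple of $\phi_n^2$. The crucial additional input, due to Lupu \cite{Titus0,Titus}, is that this coupling can be arranged so that the \emph{clusters} of $\Lambda_n$ coincide \emph{exactly} with the sign clusters of $\phi_n$, i.e. with the maximal connected subsets of the cable system on which $\phi_n$ keeps a constant sign. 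Thus, already at the discrete level, the outer boundaries of the outermost loop-soup clusters are literally identical to the outer boundaries of the outermost sign clusters of $\phi_n$, and the occupation field is a deterministic function of $\phi_n$.

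Next I would identify the scaling limits of the objects carried by this single coupling. On the loop-soup side, $\Lambda_n$ converges to the continuum loop-soup $\Lambda$ of intensity $1$, its occupation field converges to the renormalized occupation field $T$, and by Le Jan's theorem (item (c) above) $T$ is a constant multiple of $\phii$; moreover, by Sheffield--Werner (item (d)), the outer boundaries of the outermost clusters of $\Lambda_n$ converge to the CLE$_4$ family $\Gamma$ associated with $\Lambda$. On the field side, $\phi_n$ converges to the continuum GFF $\phi$ and, by Lupu's convergence results, the outer boundaries of the outermost sign clusters of $\phi_n$ converge to the first-layer CLE$_4$ loops of $\phi$, i.e. to the outermost level lines produced by the Miller--Sheffield coupling (item (a)).

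The conclusion is then purely logical. Since the two families of cluster boundaries are \emph{equal} for every $n$, and not merely equal in law, their scaling limits coincide as well. Hence, in the limiting coupling, the outer boundaries of the outermost loop-soup clusters are precisely the first-layer CLE$_4$ level lines of $\phi$, while at the same time the renormalized occupation field $T$ equals a fixed multiple of $\phii$. This is exactly the assertion of the proposition.

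I expect the main obstacle to be the joint convergence and the identification of the two limits as the \emph{same} random CLE$_4$. Two $\Gamma$-valued limits are in play here, the loop-soup-cluster CLE$_4$ and the GFF-level-line CLE$_4$, and one must show that the discrete equality of cluster boundaries survives the limit; this requires convergence of the cluster boundaries in a topology strong enough to preserve the outer-boundary (Carath\'eodory-type) structure, together with the convergence of the discrete sign-cluster boundaries to the continuum level lines in the same topology. This is where Lupu's metric-graph analysis \cite{Titus0,Titus} and the convergence of loop-soup clusters to CLE$_4$ from \cite{ShW} have to be combined carefully, and where the exact matching of the normalization constants relating $T$ and $\phii$ (and between the cable-graph and continuum loop-measure conventions) must be pinned down.
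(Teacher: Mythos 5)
Your overall strategy---couple the loop-soup with the GFF on cable systems via Lupu's results and pass to the scaling limit---is indeed the route the paper takes, but there is a genuine gap at the decisive step. You invoke, as a known input, that ``by Lupu's convergence results, the outer boundaries of the outermost sign clusters of $\phi_n$ converge to the first-layer CLE$_4$ loops of $\phi$, i.e.\ to the outermost level lines produced by the Miller--Sheffield coupling.'' No such result is available to cite: Lupu's theorem gives convergence \emph{in law} of the discrete cluster boundaries to a CLE$_4$, but it does not identify the limit as the level lines of the \emph{limiting field} $\phi$ in the coupling. After extracting a joint limit of the triple (loop-soup, occupation field, field), which in the paper is only a convergence in distribution along subsequences obtained by compactness, all one knows is that the limiting cluster boundaries form a CLE$_4$ coupled in \emph{some} way with $\phi$; the assertion that this CLE$_4$ is the level-line CLE$_4$ of $\phi$ is precisely the content of the proposition, so your argument is circular exactly where you flag ``the main obstacle.'' Note also that equality of the two discrete boundary families for each $n$ does not by itself survive a distributional limit in any useful sense, since the continuum level lines are defined from $\phi$ by a separate (Miller--Sheffield) construction that is not known, a priori, to be the limit of the discrete sign-cluster boundaries.

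The paper closes this gap with a different mechanism. Instead of trying to prove convergence of discrete sign-cluster boundaries to continuum level lines, it isolates a weaker property that \emph{is} stable under convergence in distribution: Lemma \ref{lt} shows that in the limiting coupling, for any deterministic compact $A$, conditionally on the set $\tilde A$ obtained by removing the interiors of the clusters meeting $A$, the field $\phi$ restricted to $\tilde A$ is a zero-boundary GFF (i.e.\ the complement of $\tilde A$ is a local set). This is a statement about conditional laws of finitely many test-function evaluations, so it passes to the limit from the cable systems, where it holds trivially by Lupu's sign-sampling construction. Proposition \ref{p1} is then deduced by a soft a.s.-equality argument: since the level line $\tilde\gamma_0$ around the origin is a deterministic function of $\phi$, the local-set property forces $\tilde\gamma_0$ to either stay in $\tilde A$ or avoid it entirely; combined with local finiteness of CLE$_4$ (to build suitable exploration paths $\eta$ with rational coordinates) this shows $\tilde\gamma_0$ is a.s.\ surrounded by the cluster boundary $\gamma_0$, and equality of the two loops then follows because they have the same law, hence the same conformal-radius distribution. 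To repair your proposal you would need to replace your claimed convergence of sign-cluster boundaries to level lines by an argument of this kind; as written, that step assumes the conclusion.
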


The key to this proposition will be the next lemma, in the spirit of the restriction properties and CLE properties  \cite {LSWr,ShW}: Suppose that we couple a GFF $\phi$ with a Brownian loop-soup, so that the occupation times of the latter define the square of the GFF. Suppose that $A$ is some deterministic compact subset of the closed unit disc, so that $\U \setminus A$ is simply connected. We let $\tilde A$ be the set obtained by removing from $\U \setminus A$ all the insides of loop-soup clusters $\overline O_j$ that do intersect $A$. 
Then, the (easy) restriction property of the loop-soup (see \cite {ShW}) states  that conditionally on $\tilde A$, the law of the loops that stay in $\tilde A$ is exactly a loop-soup in $\tilde A$ (with independent loop-soups in the different connected components of $\tilde A$). 
The following result now provides an analogous feature for the GFF $\phi$:

\begin{figure}[ht!]
\begin{center}
\includegraphics[scale=.6]{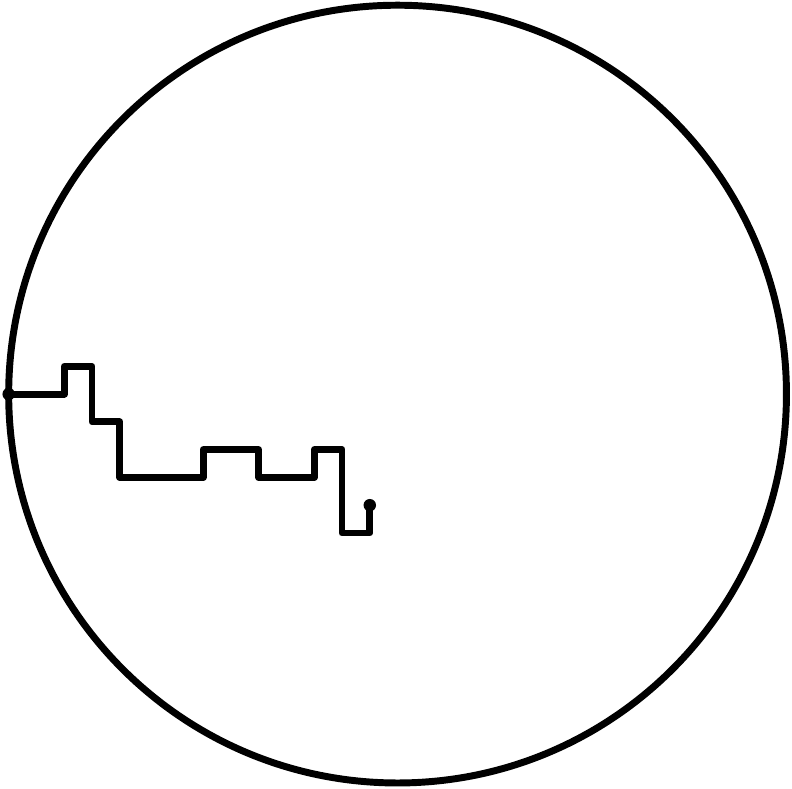}
\includegraphics[scale=.6]{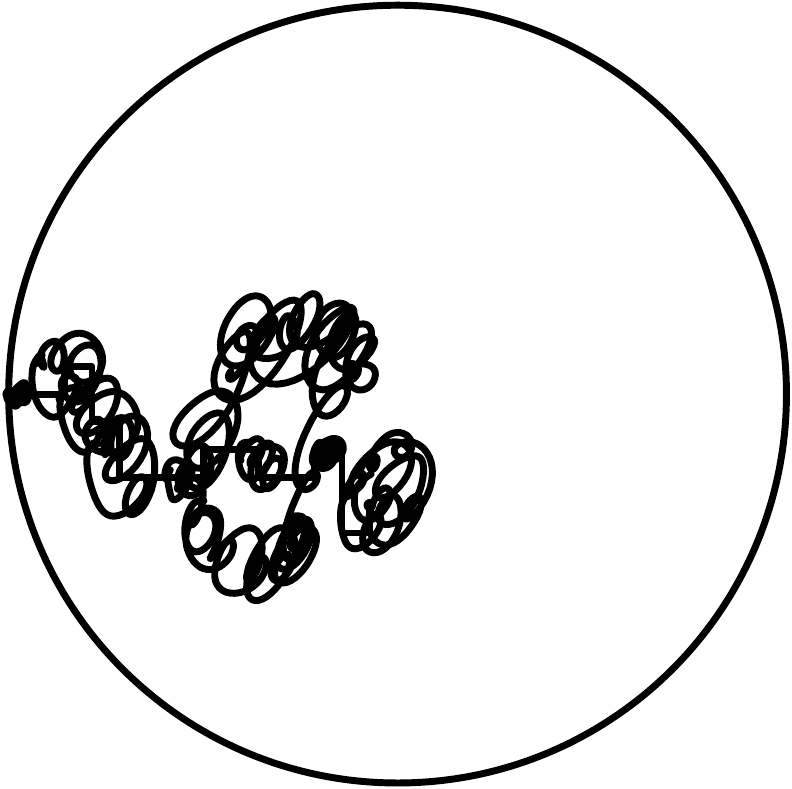}
\includegraphics[scale=.6]{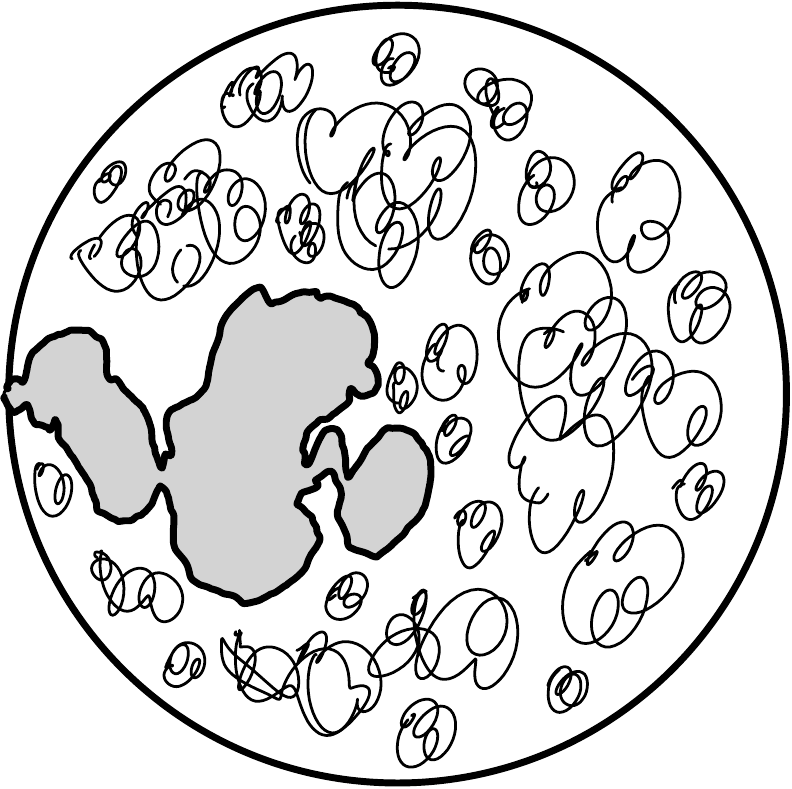}
\caption {The set $A$ (here a union of segments), the set $A$ and the clusters that it intersects, and the loop-soup in the component of $\tilde A$ that contains the origin.}
\end{center}
\end {figure}

\begin {lemma}
\label {lt}
One can couple the GFF and the loop-soup in such a way that $\phii$ is the renormalized occupation time intensity of the loop-soup, and so that for all given $A$, 
conditionally on $\tilde A$, the conditional distribution of the restriction of $\phi$ to $\tilde A$ is just a GFF in $\tilde A$ (with zero boundary conditions).
\end {lemma}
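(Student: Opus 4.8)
The plan is to build the coupling through Lupu's cable-system (metric-graph) approximation, where the relationship between the loop-soup, its occupation field and the \emph{sign} of the discrete Gaussian free field is completely explicit; to establish the restriction property first on the cable system by a soft spatial-Markov argument; and only then to pass to the continuum limit. First I would fix a fine cable-system approximation $\tilde{\mathbb G}_n$ of $\U$. There, the metric-graph GFF $\phi_n$ and a Brownian loop-soup $\Lambda_n$ of the corresponding critical intensity are coupled via the Le Jan--Sznitman isomorphism in Lupu's form (\cite{Sz,Titus0,Titus}) so that the renormalized occupation field of $\Lambda_n$ is a fixed multiple of $\phi_n^2$, so that the loop-soup clusters are exactly the sign clusters of $\phi_n$ (the connected components of $\{\phi_n\neq 0\}$), and so that, conditionally on $|\phi_n|$, the signs of $\phi_n$ on these clusters are independent fair coins. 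This is precisely the discrete avatar of the coupling demanded by the lemma, in which $\phii$ is (a multiple of) the occupation field.

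The two structural facts driving the argument are immediate on the cable system, because $\phi_n$ is continuous there and vanishes exactly along the interfaces between sign clusters. First, $\phi_n\equiv 0$ on $\partial\tilde A_n$, where $\tilde A_n$ is obtained by deleting $A$ together with every cluster that $A$ meets: indeed, any point of $A$ is either swallowed by a cluster touching $A$ (hence deleted) or is itself a zero of $\phi_n$, while the remaining part of $\partial\tilde A_n$ consists of cluster boundaries, on which $\phi_n$ vanishes as well. Second, $\tilde A_n$ is a measurable function of the restriction of $\phi_n$ to the complement of $\tilde A_n$, since knowing $\phi_n$ on $A$ and on all clusters touching $A$ reconstructs exactly the deleted region.

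I would then invoke the spatial Markov property of the metric-graph GFF. Conditionally on $\phi_n$ outside $\tilde A_n$ --- which, by the two facts above, both determines $\tilde A_n$ and imposes the boundary data $\phi_n|_{\partial\tilde A_n}\equiv 0$ --- the field $\phi_n|_{\tilde A_n}$ is a zero-boundary GFF in $\tilde A_n$, independently in the distinct connected components. Since this conditional law depends on the exterior only through the domain $\tilde A_n$ itself, averaging yields that, conditionally on $\tilde A_n$ alone, $\phi_n|_{\tilde A_n}$ is a zero-boundary metric-graph GFF in $\tilde A_n$. This runs exactly parallel to, and is compatible with, the loop-soup restriction property recalled just before the lemma, since the loops that stay in $\tilde A_n$ are precisely those whose squared occupation field reconstitutes $\phi_n|_{\tilde A_n}$.

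Finally I would pass to the limit $n\to\infty$. Lupu's convergence results (\cite{Titus0,Titus}) provide a continuum coupling $(\phi,\Lambda)$ of a GFF with a critical loop-soup for which $T$ is the prescribed multiple of $\phii$, and for which the cable-system cluster boundaries converge to the outer boundaries $\gamma_j$ of the outermost loop-soup clusters. The remaining task is to transfer the discrete restriction property: to check that $\tilde A_n\to\tilde A$ in the appropriate Carath\'eodory sense, that the conditional law of $\phi_n|_{\tilde A_n}$ converges to that of a continuum zero-boundary GFF in $\tilde A$, and that the identity $T=\phii$ is preserved under this conditioning. I expect this limiting step to be the main obstacle, for two reasons: one must control the joint convergence of the field, of its square and of the random domain $\tilde A$ simultaneously; and the clean pointwise statement ``$\phi$ vanishes on $\partial\tilde A$'' has no literal meaning for the distribution-valued continuum GFF, so it must be recast through the theory of local sets --- namely, that the complement of $\tilde A$ is a local set of $\phi$ whose relevant (outer) boundary carries vanishing boundary values. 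Granting this local-set reading together with the convergence, the continuum restriction property asserted by the lemma follows.
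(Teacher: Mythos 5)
Your strategy is exactly the paper's: Lupu's cable-system coupling, a soft discrete restriction property, then a $\delta\to 0$ limit (your discrete step goes through the Markov property of the metric-graph GFF rather than, as in the paper, through the loop-soup restriction property together with the observation that conditioning on $\tilde A^\delta$ does not change the sign-assignment rule --- but these are equivalent and both correct). The problem is that the step you defer with ``I expect this limiting step to be the main obstacle'' is not a checkable technicality but the actual content of the proof, and you leave both of its hard sub-steps unproven. First, the convergence of the recentered occupation fields $T^{\delta}-E(T^{\delta})$ requires interchanging the small-loop cutoff limit $n\to\infty$ with $\delta\to 0$; for this one needs a variance bound, uniform in $\delta$, on the contribution of loops of time-length at most $2^{-n}$. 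The paper obtains it by showing that the $7/4$-moment $J(\delta)$ of the time-length under the cable-system loop measure stays bounded, via the scaling inequality $J(\delta_{k+1})\le 25\times 16^{-7/4} J(\delta_k)+C$, whence $I(\delta,n)\le 2^{-n/4}J(\delta)$. Nothing in your sketch supplies such uniform control, and without it the a.s.\ convergence of the recentered fields along a subsequence cannot be extracted.

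Second, and more seriously, you assert that Lupu's results ``provide a continuum coupling $(\phi,\Lambda)$ for which $T$ is the prescribed multiple of $\phii$.'' They do not: compactness only yields a subsequential joint limit $(\mathcal{L},T,\phi)$, and since the Wick square is not a continuous functional of the field, the discrete identity between $T^{\delta}$ and the recentered square of $\phi^{\delta}$ does not automatically pass to $T=\phii$ in the limit. The paper proves this separately, by computing the variance of the discrepancy $Y(r,\delta,f)$ between the ball-regularized square of $\phi^\delta$ tested against $f$ and $T^{\delta}(f)-E(T^{\delta}(f))$, expressing it through the discrete Green's function and its averaged versions $G_\delta^{r}$, $G_\delta^{r,r}$, showing it converges to a quantity $V(r)$ with $V(r)\to 0$, and concluding via Fatou's lemma that $T(f)=\phii(f)$ almost surely; this Green's-function computation is entirely absent from your proposal. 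By contrast, your final worry about the pointwise meaning of ``$\phi$ vanishes on $\partial\tilde A$'' is a non-issue: the paper never invokes boundary values or local-set boundary theory at that stage, but formulates the restriction statement purely through conditional finite-dimensional marginals $(\phi(f_{j_1}),\ldots,\phi(f_{j_k}))$ for test functions supported in $\tilde A$, a formulation that transfers from the cable systems directly once the almost sure convergences --- including Lupu's theorem that $\tilde A^{\delta}\to\tilde A$, which is what rules out percolation of microscopic loops --- have been established.
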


We start with proving the lemma first: 

\begin {proof}[Proof of Lemma \ref {lt}]
Recall that a GFF $\phi$ in $\U$ has a version such that the mapping $f \mapsto \phi (f)$ is continuous on the space of smooth test functions with compact support in $\U$. In particular, in 
order to show that the conditional distribution of the restriction of $\phi$ to $\tilde A$ is a GFF in $\tilde A$, it suffices to see that for a given well-chosen sequence 
$(f_1, \ldots, f_j, \ldots)$ of such test functions (independent of the choice of $A$), for each $j_1, \ldots, j_k$, on the event that the supports of $f_{j_1}, \ldots, f_{j_k}$ are in $\tilde A$, the conditional distribution of 
$(\phi (f_{j_1}), \ldots, \phi (f_{j_k}))$ is the one corresponding to a GFF in $\tilde A$ (i.e. it is that of a Gaussian vector with the appropriate covariance structure).

Our proof will use Titus Lupu's recent approach to continuous structures and CLEs via loop-soups on cable-systems \cite {Titus}: 
Recall that the cable-system on $\delta \Z^2 \cap \U$ is the intersection with $\U$ of the union of all the closed segments of length $\delta$ that join neighbouring points in $\delta \Z^2$. 
On this union of one-dimensional segments, one can naturally define Brownian loops, that correspond to
one-dimensional Brownian motions moving along the segments that constitute the edges of the graph. The trace on the sites of $\delta\Z^2$ of these loops correspond to random walk loops. 

Let us first note that the analogous statement to Lemma \ref {lt} on cable-systems holds. Indeed (see \cite {Titus0,Titus}), suppose that we are looking at the cable-system approximation of $\U$ and $A$ on the square grid with mesh-size $\delta$ and  denote them by $\U^\delta$ and $A^\delta$. Let $\mathcal{L}^\delta$ be the collection of loops of a loop-soup (with appropriate density, that gives rise to the discrete GFF) on this cable-system as defined in \cite {Titus0}. 
Then, one can first discover the loop-soup clusters of this loop-soup that intersect $A^\delta$, and then, exactly as in the continuous case, the part of the 
loop-soup in the complement $\tilde A^\delta$ of the discovered set is (conditionally on $\tilde A^\delta$) distributed exactly like 
a (cable-system) loop-soup in $\tilde A^\delta$. 
Furthermore, it is known that the occupation times of the cable system loop-soup is exactly the square of a GFF on the cable system, and Lupu \cite {Titus} explains how to actually obtain a 
GFF $\phi^\delta$ itself (and not just its square): Just take the square root of the occupation times (i.e. of the local times) of the loop-soup on the cables, and choose the sign of  $\phi^\delta$ independently for each loop-soup cluster. In particular, if one conditions on $\tilde A^\delta$, one does clearly not change the rule to construct the GFF out of the loop-soup in $\tilde A^\delta$,
so that indeed, conditionally on $\tilde A^\delta$, the conditional distribution of the restriction of $\phi^\delta$ to $\tilde A^\delta$ is that of a GFF in $\tilde A^\delta$. 

The idea of the proof is to deduce Lemma \ref {lt} by taking the $\delta \to 0$ limit of this result on the cable systems. One just has to make sure that all the elements of the discrete 
picture do  converge in some appropriate way to their continuous counterparts: 

Let us start with the convergence of the loop-soups themselves (we will then turn to their occupation times and to the loop-soup clusters):
Here, it is easier to describe the convergence of rooted loops, because we will want to control also their 
occupation times. Recall that a rooted loop can be obtained from an unrooted one by just sampling the root uniformly at random (with respect to the time-parametrization of the loop). 
When $\gamma$ and $\gamma'$ are two rooted continuous-time loops in $\U$ (they could actually be loops on a  $\delta$-cable system) with respective time-lengths $t$ and $t'$, we
can use the distance 
$$ d ( \gamma, \gamma' ) := | t-t'| + \sup_{[0,1]} | \gamma (\cdot / t ) - \gamma' (\cdot / t') |.$$
In each $\mathcal{L}^\delta$, if we look at the collection of loops with time-length  greater than $2^{-n}$ and denote it by $\mathcal{L}^\delta_n$, then this collection is a.s. finite, and it converges in law to the collection $ {\mathcal L}_n$ of loops of time-length greater than $2^{-n}$ in a Brownian loop soup, when one uses 
for instance the following distance between finite collections of rooted loops:
$$
d^*(\mathcal{L},\mathcal{L'})= 
\min_{\sigma} ( \max_{\gamma\in\mathcal{L}} d(\gamma,\sigma(\gamma))) 
$$
where the min is taken over all bijections $\sigma$ from 
${\mathcal L}$ to $\mathcal {L}'$, with the convention that $\min \emptyset = \infty$ (so that the distance between two collections of loops that do not have the same number of loops is infinite). 
This follows easily from the weak convergence of the discrete loop measure to the continuous one (see for instance \cite {LTF}). 

The goal of the following few paragraphs is to explain that the recentered occupation time fields of the cable-system loop-soup $\mathcal L^{\delta}$ can be made to converge to the 
renormalized occupation time field of $\mathcal L$.  
Each loop-soup $\mathcal{L}^\delta$ defines an occupation time field $T^\delta $  on 
the cable system, with intensity with respect to the Lebesgue measure on the cable system given by the square of the GFF $\phi^\delta$ on the cable system (see \cite {Titus0,Titus}). Mind that $\phi^\delta$ is a finite continuous function on the cable system and that $T^\delta$ is  non-negative field (it is not the recentered occupation time field). 
For each integer $n$, we denote by $T^\delta_n$ and $T_n$ the (non-recentered) occupation time fields of the set of loops of ${\mathcal L}^\delta_n$ (resp. ${\mathcal L}_n$) with time-length at least  $2^{-n}$.
Due to the previously described convergence of  $\mathcal{L}^\delta_n$,  it is clear that for each $n$, the field $T^\delta_n$ converges in law to  $T_n$ as $\delta \to 0$ (in the 
sense that for any finite set of smooth test functions $(f_1, \ldots, f_j)$ the vector $(T_n^\delta (f_1), \ldots, T_n^\delta (f_j))$ converges in law to the corresponding vector for $T_n$). 
It is also easy to check that for each $n$, $E ( ( T_n^\delta (1) )^2 )$ is bounded independently of $\delta$ (note that  $T_n^\delta (1)$ is the sum of the time-lengths of all 
the loops in $\mathcal {L}^{\delta}_n$).  

In view of studying the loop-soup clusters, it is useful to think in terms of almost sure convergence. 
By Skorokhod's representation theorem, when $(\delta_k)$ is a decreasing sequence that converges to $0$, 
one can find a probability space on which all the loop-soups ${\mathcal L}^{\delta_k}$ and ${\mathcal L}$ 
are simultaneously defined, in such a way that almost surely, for all $n \in \Z$, the collection ${\mathcal L}^{\delta_k}_n$ converges to ${\mathcal L}_n$ and the renormalized occupation times $T^{\delta_k}_n$ converge to  $T_n$ (in the sense that when $(f_j)$ is a given sequence of smooth test functions, these fields applied to $f_j$ do converge). 
By the uniform bound in $L^2$ mentioned at the end of the previous paragraph, we get that for each given $j$,
$E ( T_n^{\delta_k} (f_j) )$ converges to $ E ( T_n (f_j) )$, so that almost surely,
$$ T_n (f_j) - E ( T_n (f_j) ) = \lim_{k \to \infty} [ T_n^{\delta_k} (f_j)-  E ( T_n^{\delta_k} (f_j) )] .$$

On the other hand, 
for each smooth test function $f_j$, we know that
$$ T(f_j) = \lim_{n \to \infty}   [ T_n (f_j) - E (T_n (f_j)) ] $$ 
in $L^2$ (this is just the definition of the renormalized occupation time $T$), and 
that for each given $k$, 
$$ T^{\delta_k} (f_j) - E ( T^{\delta_k} (f_j) ) = \lim_{n \to \infty}  [ T_n^{\delta_k} (f_j) - E ( T_n^{\delta_k} (f_j) ) ]$$
almost surely (this is just because the sum of occupations times of the ${\mathcal L}^{\delta_k}$ and the sum of the expectations both converge). 

We now wish to use $L^2$ bounds in order to interchange the $k \to \infty$ and $n \to \infty$ limits and to conclude that $T(f_j)$ is the limit in probability of  
$ [ T^{\delta_k} (f_j) - E ( T^{\delta_k} (f_j) ) ]$.
For this, let us now first note that for each $j$, the variable $T^{\delta_k} (f_j) - T_n^{\delta_k} (f_j)$ comes from the Poisson point process of loops with time-length at most $2^{-n}$ on the 
cable system. 
The variance of this random variable is therefore decreasing in $n$ and 
equal to the expectation of the sum over all loops of ${\mathcal L}^{\delta_k} \setminus {\mathcal L}^{\delta_k}_n$
of the square of the integral with respect to time of $f_j$ over the loop. 
This integral is bounded by $\sup |f_j|$ times the time-length of the loop. Hence, the variance of  $T^{\delta_k} (f_j) - T_n^{\delta_k} (f_j)$ is bounded 
by $\sup |f_j|^2$ times the integral $I ( \delta_k, n)$ with respect to the loop-measure on the cable-system in the disc, restricted to loops of time-length at 
most $2^{-n}$, of the square of the time-length of the loop. It remains to show that this last integral is bounded independently of $k$ by a quantity that goes to $0$ as $n \to \infty$. 

Let us prove this in two steps. Let us first consider the loop-soups in the square $[-1,1]^2$ instead of
the unit disc, and choose to work with the sequence $\delta_k = 4^{-k}$. The first goal is to prove that the integral $J( \delta_k)$ 
with respect to the loop-measure on the cable-system in this square of the time-length of the loop to the power $7/4$ is bounded independently of $k$ (here $7/4$ is just chosen 
because it is smaller than $2$ but large enough so that the following argument works). 
This is a direct consequence of the fact that 
$$ J (\delta_{k+1}) \le 25 \times 16^{-7/4} J( \delta_{k})  + C $$  for some absolute constant C (cover the square with
$5 \times 5$ squares of side-length $1/4$, use scaling for the contribution to $J( \delta_{k+1})$ for those loops that stay
in one of the 25 squares and use the fact that the loops that are not contained in any one of the 25
smaller squares have a diameter at least $1/16$ -- we leave the details to the reader).
Then, we can note (using the fact that $t^2 \le 2^{-n/4} t^{7/4}$ when $t \in (0, 2^{-n})$) that
$$  I (\delta_k, n ) \le 2^{-n/4} J ( \delta_k)$$
to conclude. 
 
Putting all the pieces together shows that  
$ T(f_j)$ is the limit in probability of  
$ [ T^{\delta_k} (f_j) - E ( T^{\delta_k} (f_j) ) ]$
as $k \to \infty$.
Hence, one can extract a deterministic subsequence of $\delta_k$, such that this convergence takes place 
almost surely along that subsequence. Since this is true for each given $j$ and each sequence $\delta_k$, by the standard diagonal argument, we conclude that one can extract a 
subsequence of $\delta_k$ such that the convergence holds almost surely for all $j$ simultaneously along that subsequence. 
In this way, we have  obtained a joint convergence of the loop-soups and their renormalized occupation time fields for some deterministic sequence of mesh-sizes that goes to $0$. 
In the following paragraphs, we will just again call this sequence $\delta_k$. 

Now, a key feature is the convergence of the cable system loop-soup clusters to the continuous clusters when $\delta \to 0$, established by Lupu in \cite{Titus}. 
Indeed, we know that two Brownian loops in ${\mathcal L}$ that intersect will 
correspond (when $k$ is large enough) in ${\mathcal L}^{\delta_k}$ to loops that intersect as well. Similarly, for all given $A$, the loops of $\mathcal L$ that intersect $A$ will correspond to loops on the cable-system that 
intersect $A$ as well. Hence, when $k \to \infty$, the set $\tilde A^{\delta_k}$ will almost surely be contained in a set (corresponding to the complement of the cluster of macroscopic loops attached to $A$) that converges to $\tilde A$ (in an appropriate topology). Note that the limit of the discrete loop-soup clusters could a priori be larger than the continuous cluster, because of the presence of all the little discrete loops of microscopic or intermediate size (for instance, the collection of 
loops of size smaller than $\sqrt {\delta}$ could percolate). But this is precisely ruled out by Lupu's result: He showed in particular that $\tilde A^{\delta_k}$ converges in distribution to $\tilde A$, so that in the present coupling, one has almost sure convergence of the connected components $\tilde A^{\delta_k}$ to $\tilde A$ (for instance in the Carath\'eodory topology).

Summarizing things, we have the almost sure convergence of ${\mathcal L}^{\delta_k}$, of   $T^{\delta_k}(f_j)- E ( T^{\delta_k} (f_j))$ for all $j$, and  
for each given $A$, the almost sure convergence of the sets $\tilde A^{\delta_k}$ 
to $\tilde A$. We now need to put the GFF itself into the picture (and not just its square).
Each $\mathcal{L}^{\delta_k}$ defines a squared GFF $T^{\delta_k}$ (via its occupation-time field) and, as explained above, it can be coupled with an actual GFF $\phi^{\delta_k}$ on the cable-system by sampling independently the signs of the GFF for each loop-soup cluster. Note that given the convergence of the discrete clusters to the continuous ones,
it is possible to couple these choices of signs for all $k$, in such a way that for any given loop in $\mathcal L$, the sign that will be assigned to the corresponding loop in ${\mathcal L}^{\delta_k}$ will be the same for all $k$ large enough but this will actually not be needed in the argument that follows, because we will again work with convergence in distribution rather than almost sure convergence. 

Let us study the distribution of the triple $({\mathcal L}^{\delta_k}, T^{\delta_k}- E ( T^{\delta_k}), \phi^{\delta_k})$ as $k \to \infty$.   
For each of our given test functions $f_j$, the sequence $\phi^{\delta_k} (f_j)$ is a sequence of Gaussian vectors that converges in distribution to a Gaussian random variable. We can then 
invoke compactness (and a diagonal argument), to deduce that (possibly replacing $(\delta_k)$ by a deterministic subsequence) the triple 
$({\mathcal L}^{\delta_k}, T^{\delta_k}- E ( T^{\delta_k}), \phi^{\delta_k})$ does converge in distribution to the law of some triple $( {\mathcal L}, T, \phi )$ where $\phi$ is a GFF in $\U$ coupled 
in some way to $( {\mathcal L}, T)$. 

The next paragraphs will be devoted to the proof of the fact that is that in this coupling, $T$ is indeed the renormalized square $\phii$ of $\phi$. In other words, we want to check the stand-alone fact that the joint law of a GFF and its renormalized square 
on the cable systems converge as the mesh of the lattice goes to $0$, to the joint law of $( \phi, \phii)$ (indeed, we know that for each $k$,  $T^{\delta_k}$ is exactly the square of the cable system GFF $\phi^{\delta_k}$). 
A first remark is that the trace on the sites of $(\delta \Z^2) \cap \U$ of GFF (and its square) on the cable system is 
exactly the discrete GFF on this graph (and its square). When applied to smooth test functions, the difference between the two vanishes as $\delta \to 0$, so that it is sufficient 
to study the convergence of the discrete GFF (on $\delta \Z^2 \cap \U$) and its square, instead of that of the GFF on the cable systems.

Let us now consider a given smooth test function $f$, and evaluate the $L^2$ norm of the random variable 
$$ Y(r, \delta, f) :=  \int (\pi r^2)^{-2} \left( \phi^{\delta} (B_r(x))^2-E[\phi^{\delta}(B_r(x))^2]\right) f(x) dx -  [ T^{\delta} (f) - E ( T^{\delta} (f) ) ],$$
where we view $\phi^{\delta}$ as a function that is constant on the $\delta \times \delta$ square centered on a site of $\delta \Z^2 \cap \U$.
With this notation,  
$$  T^{\delta} (f) - E ( T^{\delta} (f) )  = \int f(x) [   \phi^{\delta} (x)^2 - E ( \phi^{\delta} (x)^2 ) ]  dx,$$
so that 
$$E [Y(r, \delta, f)^2] = 
E \left[ 
\int \int f(x) f(y) ( U^{\delta,r} (x) - \hat U^{\delta} (x) ) ( U^{\delta,r} (y) - \hat U^{\delta} (y) ) dx dy \right], 
$$
where 
$$ U^{\delta,r} (x):= (\pi r^2)^{-2} \left( \phi^{\delta} (B_r(x))^2-E[\phi^{\delta}(B_r(x))^2]\right) 
\hbox  { and } 
\hat U^{\delta} (x) :=  \phi^{\delta} (x)^2 - E ( \phi^{\delta} (x)^2 ) .$$
Using Fubini and the covariance structure of $\phi^{\delta}$ (using for example Gaussian integration by part), it follows that 
$$
E [Y(r, \delta, f)^2] = 2  \int \int f(x) f(y) (  G_\delta^{r,r}(x,y)^2  + 
  G_\delta(x,y)^2  - 2
 G_\delta^r(x,y)^2 )  dx dy
$$
where $G_\delta (x, y)$ is equal to the discrete Green's function evaluated at the sites of $\delta \Z^2$ closest to $x$ and $y$, and $G_\delta^{r,r}, G_\delta^r$ denote its two meaned out versions
$$G_\delta^{r,r} (x,y) := \int_{B(x,r) \times B(y,r)} G_\delta (x', y') dx' dy' / (\pi r^2)^2, \quad G_\delta^r(x,y):=\int_{B(x,r)}G_\delta(x',y)dx'/(\pi r^2).$$
We can note that for fixed $r$, this quantity  converges as $\delta \to 0$ to 
$$  V(r):= 2 \int\int dx dy f(x) f(y) \left( G^{r,r} (x, y)^2 +G (x, y)^2 - 2 G^r(x,y)^2\right),$$ 
where $G$ is equal to the continuous Green's function and $G^r, G^{r,r}$ the corresponding meaned out versions of $G$ (this is because the three functions $G_\delta, G_\delta^r, G_\delta^{r,r}$ converge respectively to $G, G^r, G^{r,r}$ uniformly on the region where $d(x,y)\ge \eps$ for any given $\eps>0$, and the integral on the region where $d(x,y)<\eps$ can be shown to be bounded by an $o(\eps)$ uniformly over $\delta$ -- we leave the details to the reader).
It is also easy to check that $V(r)$ converges to $0$ as $r$ goes to $0$ (this is because $G^{r,r}, G^r$ both converge uniformly to $G$ for $(x,y)$ such that $d(x,y)\ge \eps$ and the 
integral on the region where $d(x,y)<\eps$ can be controled uniformly in $r$).

For fixed $r$, we know that when one lets $k \to \infty$, the random variable $Y (r, \delta_k, f)$  converges in distribution to 
$$ Y(r,f) := \int \frac{1}{\pi r^2}\left( \phi (B_r(x))^2-E[ \phi (B_r(x))^2]\right) f(x) dx -  [ T (f) ].$$
Hence by Fatou's lemma, we have $E[ Y(r,f)^2 ] \le V(r).$ 
But as $r \to 0$, $V(r)$ converges to $0$  
which implies that  $Y(r,f)$ converges to $0$ in $L^2$. 
On the other hand, when $r \to 0$, the definition of $\phii$ shows that  $Y(r,f)$ converges in $L^2$
 to $\phii (f) - T (f)$, so that we can  conclude that $T(f) = \phii (f)$ almost surely. 

Finally, in order to show that this coupling of $( {\mathcal L}, T, \phi )$ does fulfil all the conditions of the lemma, it 
only remains to check that for all given $A$, conditionally on $\tilde A$, the conditional distribution of the restriction of $\phi$ to $\tilde A$ is just a GFF in $\tilde A$. This follows immediately from the fact that the corresponding result holds on the cable systems.  
\end {proof}

Note that this lemma shows in particular that the complement of $\tilde A$ is a local set for the GFF $\phi$ (in the definition introduced by Schramm and Sheffield \cite {SchSh2}, see also \cite {Wgff} for a survey). We now explain how to deduce the proposition from the lemma: 

\begin {proof}[Proof of Proposition \ref {p1}]
Let us consider the coupling given by Lemma \ref {lt} of a GFF $\phi$ with a loop-soup $\Lambda$ (so that the square of the GFF corresponds to the occupation time of the GFF), and let us define on the one hand a CLE$_4$ as outer boundaries of the loop-soup clusters CLE$_4$ and on the other hand the ``level-line CLE$_4$'' defined from $\phi$.

Because of conformal invariance, and because there are only countably many loops, in order to prove that these two CLE$_4$'s are in fact identical, it suffices to prove that the CLE$_4$ loop that surrounds the origin is almost surely the same for both (by conformal invariance, it implies that the loop that surrounds a given point $z$ coincide as well). 
Let $\gamma_0$ and $\tilde \gamma_0$ denote these two loops (the former is the outer boundary of the loop-soup cluster around the origin, and the latter is the outermost GFF level line surrounding the origin). We know that they have the same distribution, so that their conformal radii (i.e. the conformal radii of their interiors, as seen from the origin) have the same distribution. In order to prove that $\gamma_0$ and $\tilde \gamma_0$ are almost surely equal, it therefore suffices
to show that almost surely, $\tilde \gamma_0$ lies in the closure of the interior of $\gamma_0$, i.e. that no point with rational coordinates inside of $\tilde \gamma_0$ lies to the outside of the loop $\gamma_0$.

\begin{figure}[ht!]
\begin{center}
\includegraphics[scale=.55]{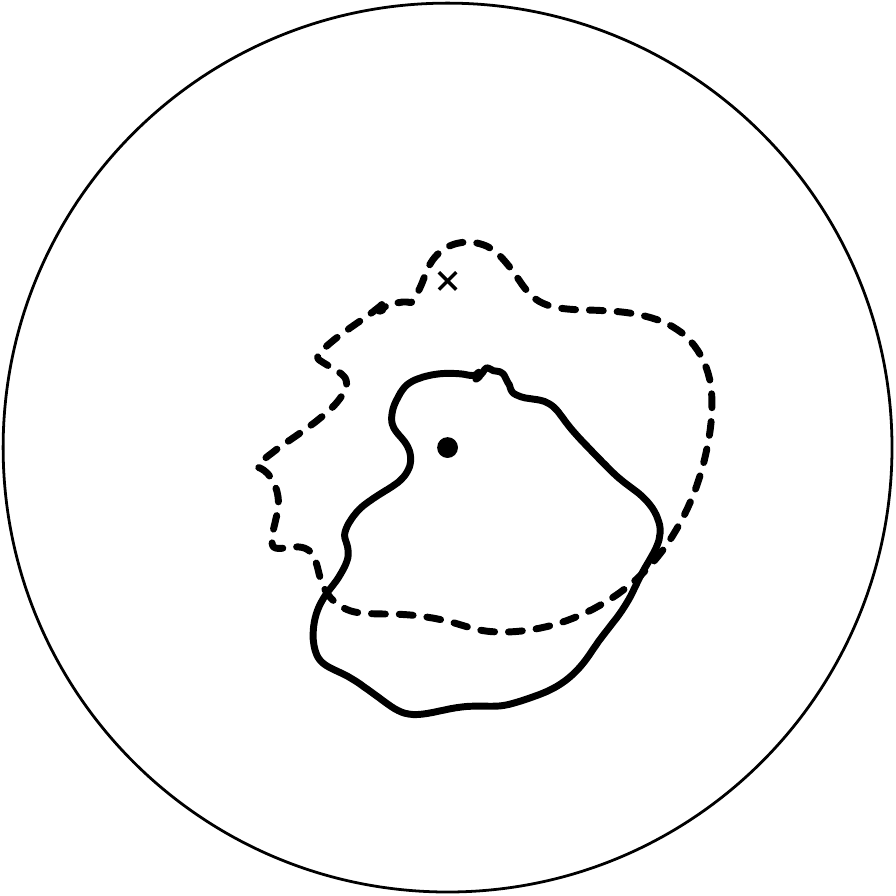}
\includegraphics[scale=.55]{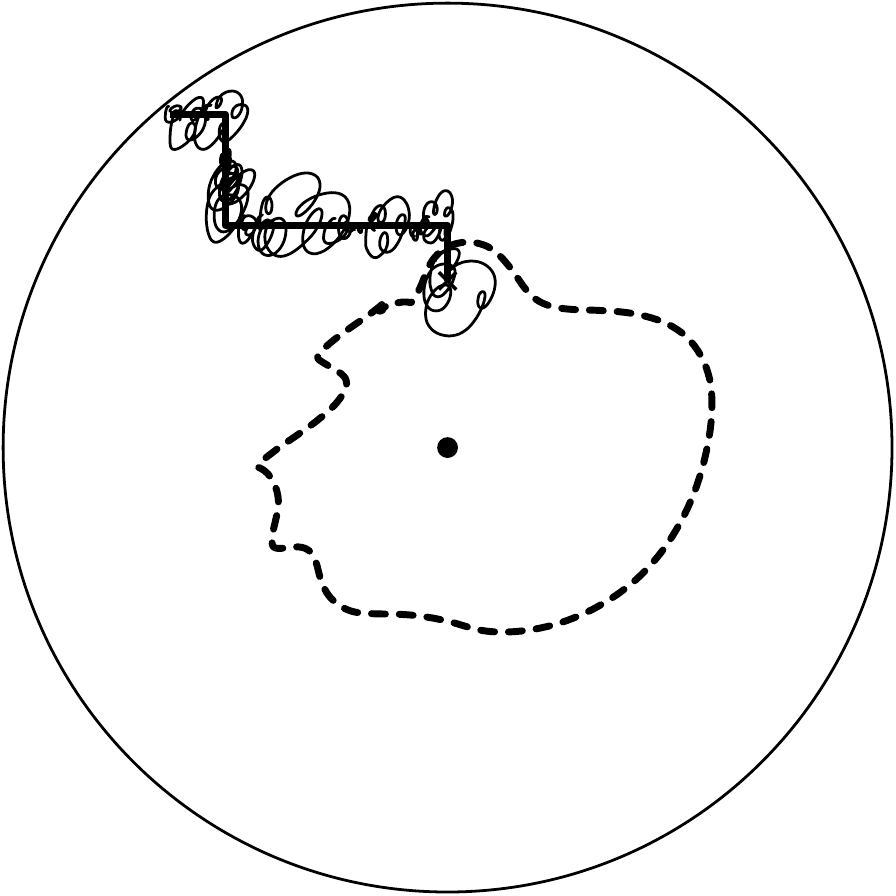}
\includegraphics[scale=.55]{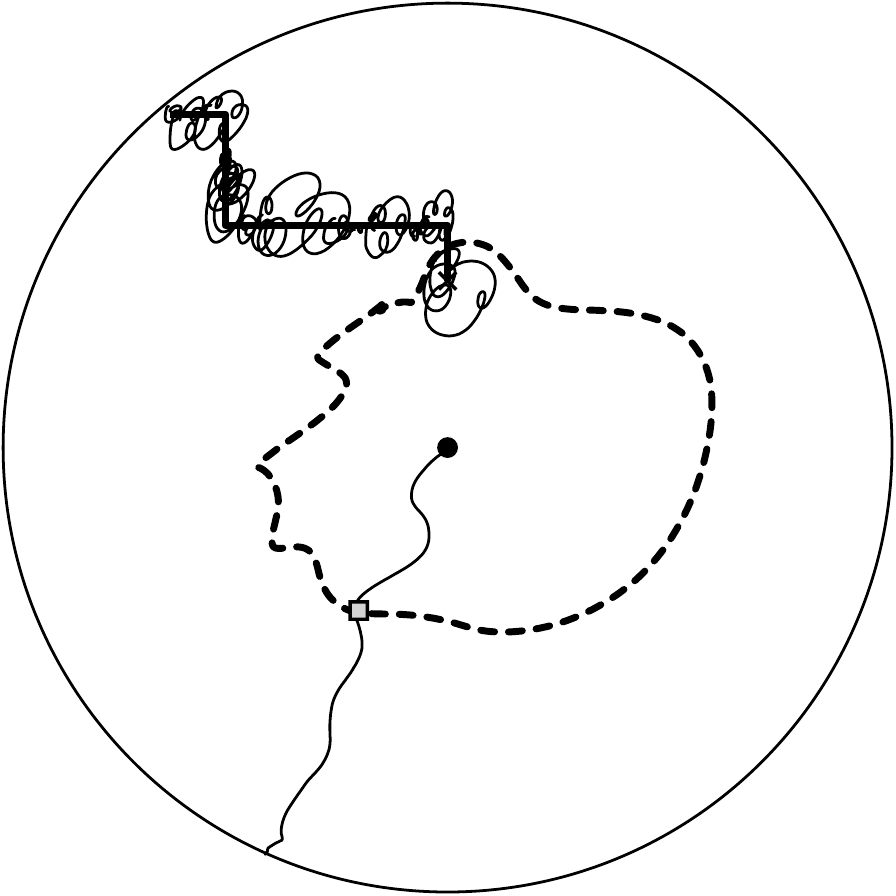}
\caption {(a) $\gamma_0$ (plain), $\tilde \gamma_0$ (dashed), and a point $z_0$ outside of $\gamma_0$ but inside of $\tilde \gamma_0$. (b) A well-chosen path from $\partial \U$ to $z_0$ with the cluster it intersects. (c) The loop $\tilde \gamma_0$ intersects $\tilde A$ and goes out of $\tilde A$. \label {figure5}}
\end{center}
\end {figure}

Suppose that some point with rational coordinates inside of $\tilde \gamma_0$ lies to the outside of $\gamma_0$. Because of the fact that CLE$_4$ is locally finite, one could then actually find a continuous path $\eta$ made of finitely many horizontal or vertical segments of lines with rational $x$ or $y$-coordinates that joins this $z_0$ to the unit circle, and such that when one attaches to $\eta$ all interiors of loop-soup clusters to it intersects, then one has still not yet discovered $\gamma_0$, nor disconnected $\gamma_0$ from $\partial \U$. If we would have set $A$ to be that $\eta$, then it means that the connected component of $\tilde A$ that contains the origin also has part of $\partial \U$ on its boundary (because $\tilde A$ does not disconnect $\gamma_0$ from the unit circle). If we then draw a continuous path in that open set from the origin to the unit circle, then it necessarily intersects 
$\tilde \gamma_0$, because this loop surrounds the origin. So, there is some point of $\tilde \gamma_0$ that lies in the connected component of $\tilde A$ that contains the origin. 
On the other hand, the loop $\tilde \gamma_0$ cannot entirely lie in $\tilde A$ because it surrounds $z_0$ and therefore intersects $\eta$. Hence, $\tilde \gamma_0$ contains at least one ``excursion'' away from the boundary of $\tilde A$ to its inside (see  Figure \ref {figure5}). 

Let us now fix a point with rational coordinates and a path $\eta$ made of finitely many horizontal and vertical segments with rational $x$ respectively $y$ coordinates. We set $A = \eta$ and define $\tilde A$ as above. 
Recall that $\tilde \gamma_0$ is a level line of the GFF $\phi$ with $0/\pm 2 \lambda$ values on its two sides, and that it is a deterministic function of the GFF. Conditionally on $\phi$, it is  therefore independent from everything, and in particular from the local set $\tilde A$. Recall also that conditionally on $\tilde A$, $\phi$ restricted to $\tilde A$ is a GFF in that set with zero boundary conditions, from which one can easily deduce (using similar arguments as in \cite {ASW}) that almost surely, $\tilde \gamma_0$ does either entirely stay on $\tilde A$ or does not intersect $\tilde A$ at all. 

Hence (because there are countably many such possible points with rational coordinates and paths $\eta$), we conclude that almost surely, 
every point that is surrounded by $\tilde \gamma_0$ is also surrounded by $\gamma_0$, which concludes the proof.
\end {proof}

\section {Conclusion of the proof of Theorem \ref {mainthm}} 
\label {S4}

In this section, we will complete the proof of Theorem \ref {mainthm}. 
What remains to be shown is that if one considers a loop-soup with $c=1$ in $\U$ and conditions on the outermost loop-soup cluster boundary $\gamma$ that surrounds the origin, 
then the union of the loops that touch $\gamma$ will form the union of a  Poisson point process of excursions with intensity $\beta=1/4$. We will do this in two steps; first we will show that this is 
true for some value $\beta$, and then we will show that in fact $\beta=1/4$.

Let us first recall a few features related to Brownian excursions and to the square of the GFF: 
Consider on the one hand a Poisson point process ${\cal E}$ of Brownian excursions in the unit disc in $\U$ with intensity $\beta >0$ (we will use the normalization of the excursion measure, so that the union of the excursions away from the upper half-circle define a one-sided restriction measure with exponent $\beta$, see \cite {Wcrrq}). This Poisson point process defines an occupation time field $\T_{\beta}$ in the unit disc (note that for any $\eps >0$, there are almost surely only finitely many excursions of ${\cal E}$ of diameter greater than $\eps$, so that for each
given domain $D$ that is at positive distance from $\partial \U$, the occupation time $\T_\beta (D)$ is almost surely finite -- another simple way to see this is to note
that the expectation of the field $\T_\beta$ is a multiple of the Lebesgue measure in $\U$). It is easy to see that the occupation field is 
determined by the 
trace of the union of all the excursions (for instance, $\T_\beta (D)$ is the appropriately scaled limit when $\eps \to 0$ of the area of the $\eps$-neighbourhood of the union of all the excursions in $D$, see \cite {LG}), and conversely, it is clear that the trace of the union of all excursions is the support of $\T_\beta$. Note also that this field is in fact a subordinator with respect to $\beta$ (because if $\T_\beta$ and $\T_{\beta'}$ are chosen to be independent, then $\T_{\beta} + \T_{\beta'}$ is distributed as $\T_{\beta + \beta'}$). 
We also define the centered occupation time field $\hat \T_\beta = \T_\beta - E(\T_\beta)$ (again, the expectation is finite, so that there is no definition difficulty here --  as noted above,  $E( \T_\beta )$ is a constant multiple of $\beta$ times the Lebesgue measure). 
We now consider also the square of a Gaussian free field $\phii$ in $\U$ that is independent from these excursions, and we will be interested in the field $\frac12\phii +  \hat\T_\beta$. 

On the other hand, for all real $u$, one can define a new field $\phiiu$, which is the ``recentered square'' of $\phi + u$. 
This can be done in several equivalent ways. 
One possibility is to note that the field $\phi + u $ is absolutely continuous with respect to $\phi$, when restricted to a set at positive distance of the unit circle. One can then consider $\phiiu$ to be the field obtained by recentering (i.e. subtracting its expectation) the field obtained by taking the corresponding Radon-Nikodym derivative of $\phii$. It is easy to check that this field is equal 
to $\phii + 2 u \phi $.

Then, one has the following well-known identity in distribution, often referred to as (a version of) Dynkin's isomorphism theorem (see for instance Sznitman \cite {Sz} for such a statement {in the discrete case and Sznitman \cite{Sz2} for the statement in the continuous case} -- it can be also viewed and understood as a consequence of the spatial Markov properties of $c=1$ loop-soups in the spirit of the recent results in \cite {Wresampling,CL}): 
\begin {proposition}[``Dynkin's isomorphism'']
\label {p2}
For some constant $k$ independent of $u$, the two fields  $\frac12(\phiiu+u^2)$ and $\frac12\phii + \T_{ku^2}$ have the same distribution.
In particular, if we subtract the  means of both sides, $\frac12\phiiu$ and $\frac12\phii+\hat\T_{ku^2}$ have the same distribution.
\end {proposition}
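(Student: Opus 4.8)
The plan is to prove the distributional identity by matching the Laplace transforms of the two fields against a suitable class of test functions. Concretely, I would fix a smooth $f$ supported at positive distance from $\partial\U$ and of sufficiently small amplitude (so that every expression below converges), and compare $E[\exp(\frac12(\phiiu+u^2)(f))]$ with $E[\exp(\frac12\phii(f)+\T_{ku^2}(f))]$. Since such $f$ determine the law of these locally integrable (signed) fields, matching the transforms for all of them yields the identity in distribution, and subtracting the finite deterministic means then gives the ``in particular'' statement for free. The mechanism I expect to exploit is that \emph{both} transforms factor through the common quantity $E[\exp(\frac12\phii(f))]$, so that the whole statement collapses to a single scalar identity, for each $f$, between a Green's-function resolvent functional (from the Gaussian side) and the exponential functional of the Brownian excursion measure (from the Poisson side).

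On the GFF side, recall that $\phiiu=\phii+2u\phi$, so that $\frac12(\phiiu+u^2)=\tfrac12\!:\!(\phi+u)^2\!:$ is the Wick square of the shifted field. Writing $G$ for the Green's function of $\U$ (the covariance of $\phi$) and $M_f$ for multiplication by $f$, a standard Gaussian Cameron--Martin computation --- completing the square in $\phi$ and absorbing the divergent diagonal of $G$ into the Wick renormalization exactly as in the definition of $\phii$ --- should give
\[
E\Bigl[\exp\bigl(\tfrac12(\phiiu+u^2)(f)\bigr)\Bigr]=E\Bigl[\exp\bigl(\tfrac12\phii(f)\bigr)\Bigr]\cdot\exp\Bigl(\tfrac{u^2}{2}\,Q(f)\Bigr),\qquad Q(f):=\bigl\langle \mathbf 1,\,M_f(I-GM_f)^{-1}\mathbf 1\bigr\rangle
\]
where $\mathbf 1$ is the constant function, i.e. the harmonic extension of the boundary value $1$. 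The only real subtlety is the bookkeeping of renormalizations; the algebraic identity $(I-GM_f)^{-1}GM_f=(I-GM_f)^{-1}-I$ is what makes the constant shift $u\mathbf 1$ collapse into the single resolvent form $Q(f)$, which is manifestly independent of $u$.

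On the excursion side, independence of $\phii$ and ${\cal E}$ factors the transform as $E[\exp(\frac12\phii(f))]\cdot E[\exp(\T_{ku^2}(f))]$, and the Poisson exponential formula gives $E[\exp(\T_\beta(f))]=\exp(\beta R(f))$ with $R(f):=\int\mu(de)\,(e^{e(f)}-1)$, where $\mu$ is the excursion measure and $e(f)$ the integral of $f$ along the excursion $e$. Expanding $R(f)=\sum_{n\ge1}\frac1{n!}\int\mu(de)\,e(f)^n$ into ordered times and using the Markov property of $\mu$, each term factors as an entrance density at the first visited point times a chain of killed-Brownian Green's functions. Since $G$ is exactly the killed-Brownian occupation kernel and the expected occupation field $E(\T_1)$ is a constant $c_0$ times Lebesgue measure in $\U$ (as recalled just before the statement), the entrance density equals the constant $c_0$, and one obtains $R(f)=c_0\,Q(f)$ term by term. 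Comparing the two transforms then forces $\frac12 Q(f)=k R(f)$ for all admissible $f$, which holds precisely with $k=1/(2c_0)$, a constant independent of $u$ and $f$; the value of $c_0$ (hence of $k$) can be read off from the $n=1$ term alone, that is, simply by matching means.

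The main obstacle is the rigorous justification of these two functional computations in the continuum rather than their formal versions: on the Gaussian side one must control the Hilbert--Schmidt (trace-class) nature of $GM_f$ and check that the Wick renormalization matches the recentering $\hat\T$, while on the excursion side one must justify the Markov decomposition of the multi-point occupation moments of $\mu$ together with the identification of the boundary entrance law as the constant $c_0$. I would handle both by restricting to $f$ of small amplitude, so that $\|GM_f\|<1$ and all series converge absolutely, and by deducing the entrance-law statement from the already-quoted fact that $E(\T_\beta)$ is proportional to Lebesgue measure. A clean alternative, bypassing the continuum functional identities entirely, is to invoke the discrete Dynkin isomorphism on the cable system (Sznitman) and pass to the $\delta\to0$ limit using the joint convergence of loop-soups, occupation fields and clusters established in the proof of Lemma \ref{lt}, after checking that the corresponding Poissonian families of boundary-to-boundary excursions also converge; this is the route I expect to be most robust.
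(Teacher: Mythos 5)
Your proposal is correct in outline, but it is worth knowing that the paper does not prove Proposition \ref{p2} at all: it quotes it as a known isomorphism theorem, pointing to Sznitman \cite{Sz} for the discrete statement and \cite{Sz2} for the continuous one (and remarking that it can also be understood via the spatial Markov/resampling property of $c=1$ loop-soups \cite{Wresampling,CL}). The only computation the paper carries out itself is the bookkeeping of the constant, done in Section 4 by matching the means of the two sides to get $k=1/(2\pi)$ --- which is precisely your observation that $k=1/(2c_0)$ can be read off the $n=1$ term alone. What you supply is essentially the standard generating-functional proof underlying the cited references, and your two computations do close up correctly: the Cameron--Martin/complete-the-square identity yields the factor $\exp\bigl(\tfrac{u^2}{2}Q(f)\bigr)$ with $Q(f)=\langle \mathbf 1, M_f(I-GM_f)^{-1}\mathbf 1\rangle$ (the resolvent identity $(I-GM_f)^{-1}GM_f=(I-GM_f)^{-1}-I$ indeed absorbs the cross term and the deterministic $\tfrac{u^2}{2}\int f$, which is the $n=1$ link of the chain), while on the Poisson side the Markov property of $\mu$ together with $E(\T_1)=c_0\,\mathrm{Leb}$ gives $\tfrac1{n!}\int\mu(de)\,e(f)^n=c_0\langle\mathbf 1,M_f(GM_f)^{n-1}\mathbf 1\rangle$, hence $R(f)=c_0Q(f)$ and $k=1/(2c_0)$, consistent with the paper's value. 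Two technical points deserve care: first, in two dimensions $GM_f$ is Hilbert--Schmidt but \emph{not} trace class, so your parenthetical ``trace-class'' is not quite right --- the divergence of $\mathrm{tr}(GM_f)$ is exactly what the Wick ordering removes (a Carleman--Fredholm $\det_2$ regularization), and since the same formal factor multiplies both sides it cancels in the ratio against $E[\exp(\tfrac12\phii(f))]$, so your ``factor through'' strategy is sound once implemented via finite-dimensional approximation; second, to upgrade equality of Laplace transforms on a small ball of test functions to equality in law, you should note that both fields tested against fixed functions have exponential moments in a neighbourhood of $0$ (order-two Gaussian chaos on one side, geometrically convergent excursion moments on the other), so the joint moment generating functions near $0$ determine the finite-dimensional marginals. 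Your fallback route through the cable-system isomorphism and the $\delta\to 0$ machinery of Lemma \ref{lt} is also viable and closest in spirit to how the paper uses Lupu's work elsewhere, but it would require the additional, nontrivial convergence of the discrete boundary-to-boundary excursion families, which the paper sidesteps by citing the continuum statement directly.
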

We will come back to the issue of what the value of $k$ actually is (it is in fact $1/ (2 \pi)$ with our normalization choices) in a few paragraphs, but let  us first combine this proposition with the previous couplings between the GFF and CLE$_4$:

Suppose that $\phii$ and the loop-soup (that defines $\gamma$) are coupled as in Proposition \ref {p1}. 
On the one hand, we know that conditionally on  $\gamma$, 
the field $\phi$ restricted to $O(\gamma)$ is distributed like $\pm 2 \lambda$ plus a GFF in $O (\gamma)$. Hence, conditionally on $\gamma$, the distribution of its recentered square is exactly the conformal image (via the conformal map $\psi^{-1}$ from $\U$ into $O(\gamma)$) of the law of $\phiiu$ for $u=2 \lambda$. But Proposition \ref {p2} now shows that this is exactly the distribution of the conformal image via $\psi^{-1}$ of 
$\phii + \hat \T_{\beta}$  where $\beta=ku^2$ (recall that we have chosen $\phii$ and $\hat \T_{\beta}$ to be independent). 

On the other hand, our previous decomposition of the loop-soup inside $O(\gamma)$, and the fact that the loop-soup occupation times define the square of a GFF, imply 
that conditionally on the loop $\gamma$, the conditional distribution of the square of the GFF inside $O (\gamma)$ minus its conditional expectation given $\gamma$, is the sum of the square of a GFF in $O(\gamma)$ with the 
(recentered) occupation time of the union of all loops of $\Lambda_0^b$ (and these two fields are also independent, conditionally on $\gamma$). In other words, this is the distribution of the image under $\psi^{-1}$ of the sum of the centered occupation time measure $\tilde \T^b$ of $\tilde \Lambda^b$ with an independent squared Gaussian free field in $\U$. 

Hence, we get that conditionally on $\gamma$, the sum of  $\tilde \T^b$ with an independent squared GFF in $\U$ is distributed like the sum of $\hat \T_\beta$ with an independent squared GFF in $\U$. 
Recall also that $\tilde \T^b$ is independent of $\gamma$, and that $\hat \T_\beta$ is also independent of $\gamma$, so that this is in fact an identity in distribution, unconditionally on $\gamma$.
It follows that for each smooth test function $f$ with support that is at positive distance of the unit circle and for any $t$ for which
$E( \exp (it\phii (f))) \not= 0$ (recall that this characteristic function is well-understood, see for instance its expression in terms of the Brownian loop-measure
that we recalled in item (c) of Section \ref{S3}, so that it is easy to check that this set of $t$'s is dense in the real line), we have
$$E \bigl( \exp (it\hat \T_\beta (f)) \bigr) = E \bigl( \exp ( it \tilde \T^b (f)) \bigr) .$$
It therefore follows that $\hat \T_\beta (f)$ and $\tilde \T^b (f)$ are identically distributed. 

But since $f \mapsto (\tilde \T^b (f), \hat \T_\beta (f))$ is linear, this identity implies that the characteristic function of any finite marginals of $\hat \T_\beta$ and $\tilde \T^b$ are identical i.e. that 
 the random fields $\hat \T_\beta$ and $\tilde \T^b$ have the same distribution. 
This proves the final statement in our theorem, except that we have not yet determined the value of $\beta$.

\medbreak

In order to show that  $\beta = 1/4$, we just need to do a bookkeeping of the constants involved in the previous argument (note that in the next section, we will also describe a heuristic argument that explains why $\beta$ has to be indeed $1/4$, that can also be turned into a -- somewhat convoluted -- proof): 

Let us first consider the Brownian excursion measure $M$ in the upper half-plane defined as the limit when $\eps$ goes to $0$ of $( \pi / \eps ) $ times the integral over $x \in \R$ of the law of 
Brownian motion started from $x + i \eps$ and stopped upon exit of the upper half-plane. Let us first see how to work out for the restriction exponent $\alpha$ the Poisson point process of Brownian excursions with intensity $M$ restricted to the excursions that start and end on the negative half-line:
It is easy to see that the  $M$-mass of the set of excursions that start and end on $[-2, -1]$ and that intersect the imaginary half-line is equal to $\log (9/8)$ because it is also equal (using  a reflection argument) to the $M$-mass of the set of excursions that start on $[-2, -1]$ and end on $[1, 2]$. The probability that a Poisson point process of excursions with intensity $M$ does not intersect the 
imaginary half-line is therefore $8/9$. It therefore follows easily that $\alpha = 1$ (by the definition of the restriction formula, and the explicit square map from the top-left quadrant onto $\HH$).

On the other hand, it is easy to see that integral over $M$ of the occupation time density at any point $z \in \HH$ is equal to $\pi / 4$. Indeed, it is equal to 
$$ \lim_{\eps \to 0} \frac { \pi }{ \eps} \int_{\R} G_\HH (x+ i \eps, z) dx =   \lim_{\eps \to 0} \frac { \pi }{  \eps} \int_{\R} G_\HH (z, x+ i \eps) dx $$ 
which is easily shown to be equal to $\pi $ (because the expected local time at height $\eps$ of a one-dimensional Brownian motion started from $\Im (z) > \eps $ and stopped at its first hitting time of $0$ is equal to $\eps$).
This implies that the expected density of $\T_{ku^2}$ is constant and equal to $k u^2 \pi$  (i.e. the expected value of the total cumulated time spent in an open set $O$ by all the excursions of the Poisson point process is $k u^2 \pi$ times the area of $O$).
Comparing this with the first  identity in law in Proposition \ref{p2}, we get that $k= 1/ (2\pi)$.  
Similarly, the value of $\beta$ corresponds to $\beta\pi=(2\lambda)^2 / 2=\pi/4$, so that we can conclude that $\beta=1/ 4$.

\section {Remarks}
\label {S5}

\subsection {A heuristic justification for the value of $\beta$} 
We now outline an argument that explains why $\beta = 1/4$ using the relation between restriction measures, loop-soups and SLE derived in \cite {WW2}.
This will allow us to make some further comments on the structure of the clusters and make the link with some other features (note for instance that this argument will not use the relation to the GFF).

Let us first recall the following result from \cite {WW2}: Consider a Poisson point process of Brownian excursions in the unit disc with intensity $\alpha$, but restricted only to those excursions that have both their end-points on the upper semi-circle. 
Then (see \cite {Wcrrq}), the lower boundary of the union of all these excursions is a simple curve $\eta (\alpha)$ from $-1$ to $1$ in $\overline \U$ that can be described in terms of restriction measures, or alternatively as a SLE$_{8/3} ( \rho)$ process for $\rho = \rho (\alpha)$. If one adds to this picture an independent loop-soup in $\U$ with intensity $c \le 1$, one can now look at the union of $\eta (\alpha)$ with all the loop-soup clusters that it intersects, and consider its lower boundary $\eta (\alpha, c)$. Then, as shown in 
\cite {WW2}, this is a simple curve from $-1$ to $1$ in $\overline \U$, that is distributed like an SLE$_\kappa (\rho)$ process, for $\kappa = \kappa (c)$ and some explicit 
$\rho$ depending on $\alpha$ and $c$ (this fact is actually instrumental in the derivation of Lupu's result \cite {Titus} that we used in the previous section).

Standard computations involving Bessel processes allow to describe simple features about these SLE$_\kappa (\rho)$ processes.   
This implies for instance that for $c=0$, $\eta (\alpha)$ touches the upper half-circle if and only if $\alpha< 1/3$ (see \cite {LSWr,Wcrrq}). 
Similarly, the value of $\alpha$ for which $\eta (\alpha, 1)$ is exactly an SLE$_4$ is $\alpha = 1/4$. When $\alpha < 1/4$, the path $\eta (\alpha ,1 )$ does touch the upper half-circle, while 
when $\alpha > 1/4$, the probability that it gets $\eps$-close to some given subarc of the upper half-circle is bounded by some power (that depends on $\alpha$) of $\eps$ as $\eps \to 0$.   

We can note that these features of the paths $\eta (\alpha, c)$  also lead to similar properties for clusters obtained by considering the superposition of a Poisson point process of excursions with intensity $\alpha$ in the unit disc with intensity $\alpha$ and no restriction on the end-points, with  a loop-soup with intensity $c$. If one looks at the union of all excursions with the loop-soup clusters that they intersect, then the boundary of the connected component of the complement of this set that contains the origin will
intersect the unit circle with positive probability if and only if the previous $\eta (\alpha, c)$  touches almost surely the upper half-circle.  

Let us also recall that an SLE$_4$ is a simple curve, but that it gets rather close to having double-points (recall that SLE$_\kappa$ curves for $\kappa > 4$ do have double points). 
As opposed to SLE$_\kappa$ for $\kappa < 4$ where this probability decays in a power-law fashion, the probability that an SLE$_4$ curve from $-1$ to $1$ in the unit disc does behave as depicted 
in the left-hand part of Figure \ref {downcrossings} decays slower than any power-law of $\eps$ as $\eps \to 0$ (it decays logarithmically). Using the description of CLE$_4$ loops by SLE$_4$, it follows that 
with a probability that is (asymptotically) larger than any power of $\eps$, one finds a loop of the kind depicted on the middle picture of Figure \ref {downcrossings}. 
But, by resampling some set of macroscopic Brownian loops in the loop-soup, we conclude that with probability (asymptotically) larger than any power of $\eps$, the inner boundary of the outermost cluster surrounding the origin does come $\eps$-close to its outer boundary (see the right picture in Figure \ref {downcrossings}). It finally follows that the same is true for the inner boundary of $\Lambda_0^b$ as well (as it is in-between the inner and outer boundaries of the cluster).

\begin{figure}[ht]
\begin{center}
\includegraphics[width=2in]{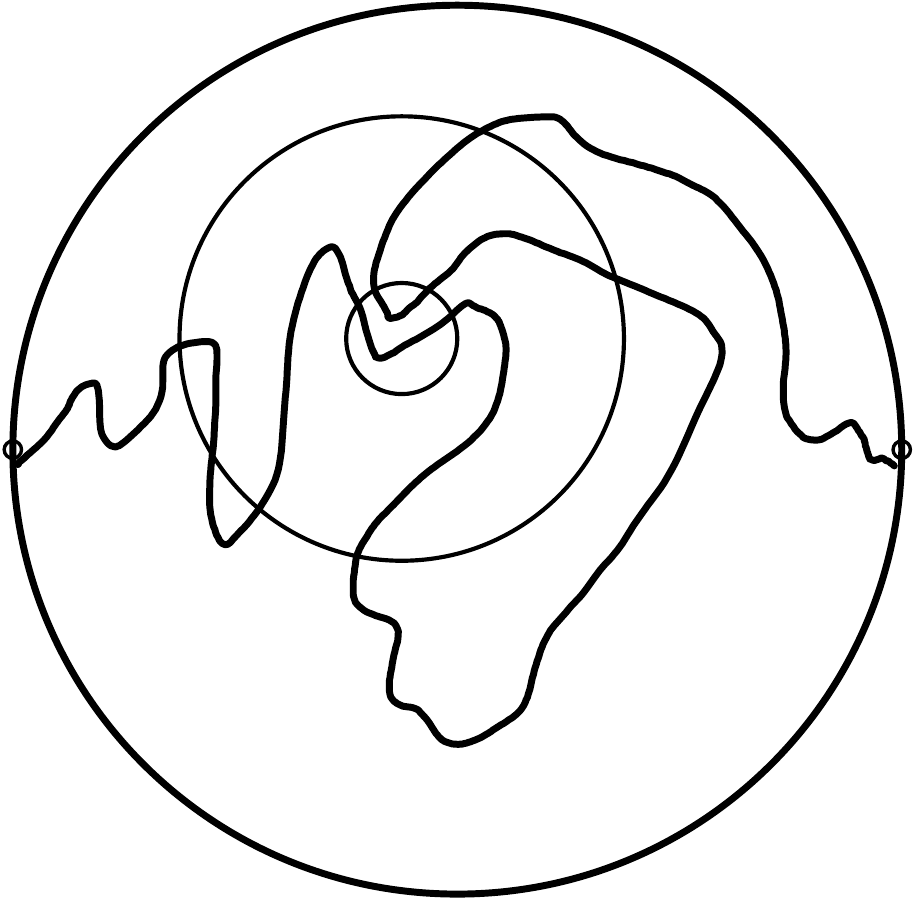}
\includegraphics[width=2in]{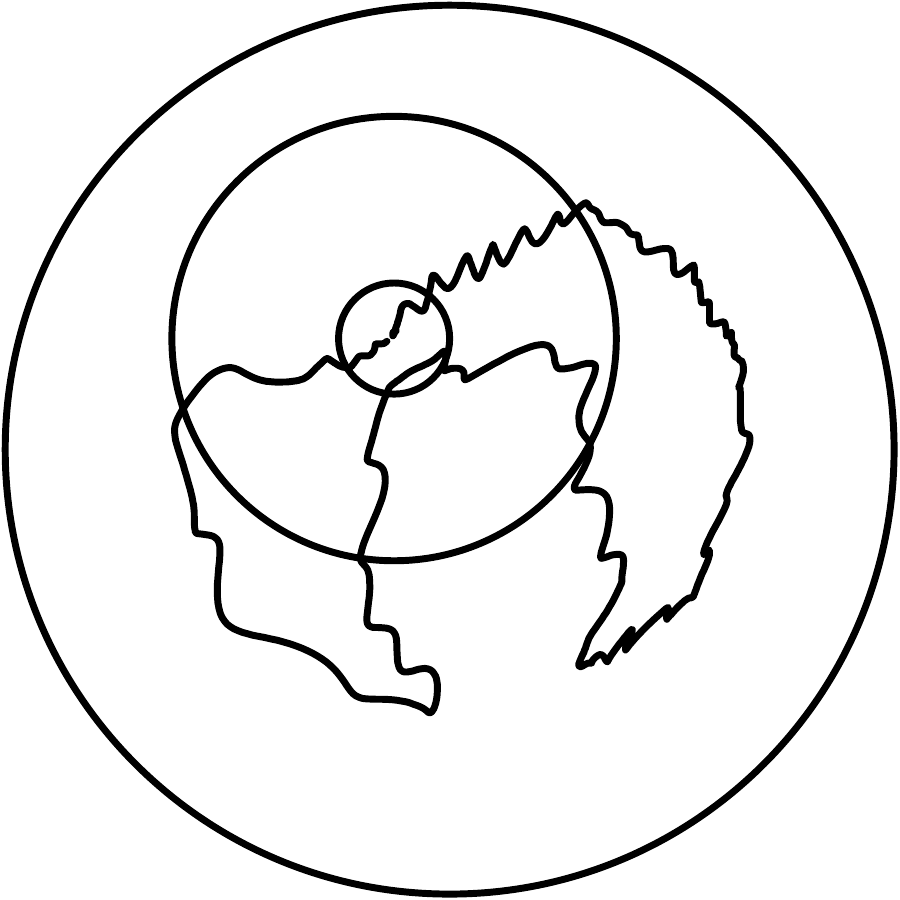}
\includegraphics[width=2in]{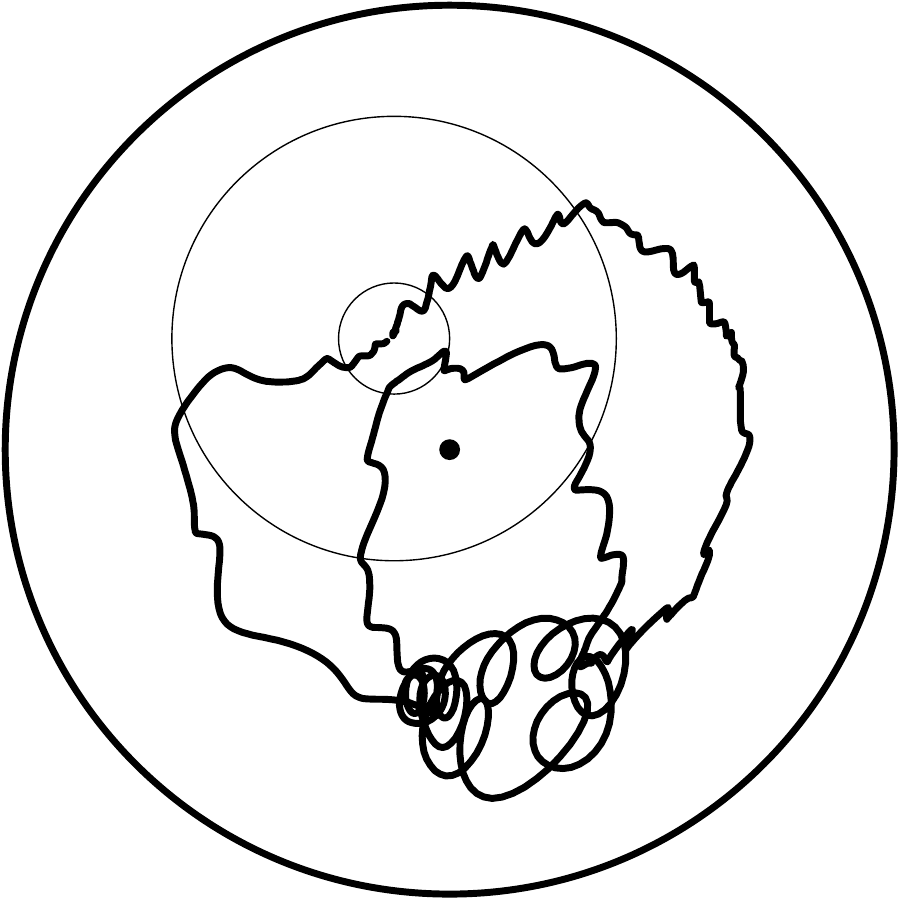}
\end{center}
\caption{\label{downcrossings} Double crossing of some (non-prescribed) annuli of radii $\eps$ and $1/2$ by an SLE$_4$ or a CLE$_4$ loop and the inside and outside boundaries of a cluster.}
\end{figure}

But, if we compare our construction of the cluster surrounding the origin (conditionally on $\gamma$) with the previously recalled properties of $\eta (\alpha, 1)$, we can deduce that $\beta \le 1/4$ (otherwise, the probability that the inside boundary would come close to the outer boundary would decay in a power-law fashion). 

Let us now explain why it is on the other hand not possible that $\beta < 1/4$. In that case, then the properties of SLE$_4 ( \rho)$ for $\rho < 0$ (i.e. that this process touches the boundary of the domain) shows that with positive probability, there exists a fractal set of local cut points for the outermost cluster 
surrounding the origin, on its outer boundary (see Figure \ref {cutpoints}). But by elementary topological considerations (and using the fact that a Brownian loop has no cut points i.e. that a Brownian motion has no double cut points), it follows that all these cut points belong to the same single Brownian loop that we call $l$, and to no other loop in the loop-soup. 
\begin{figure}[ht!]
\begin{center}
\includegraphics[scale=0.55]{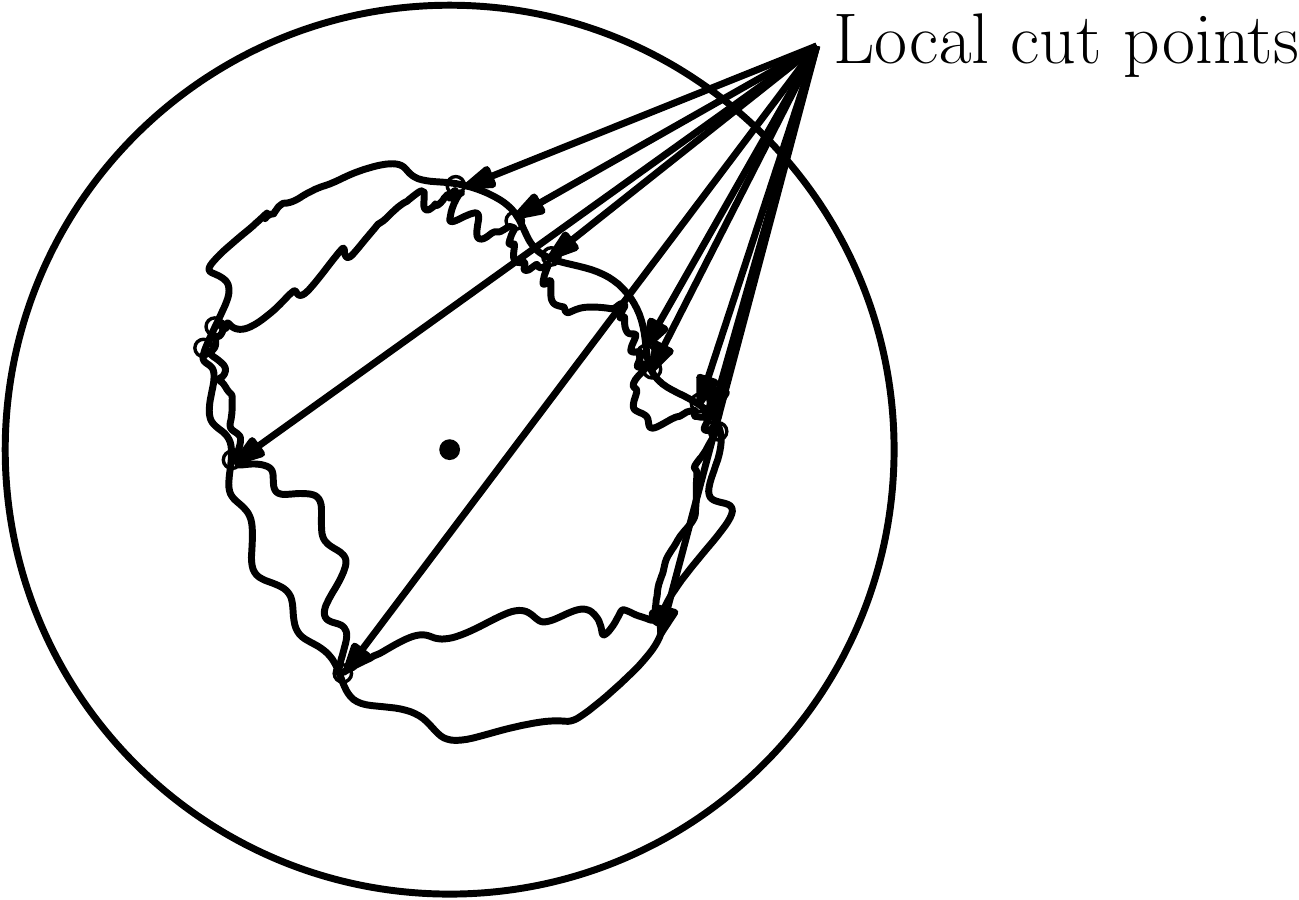}
\includegraphics[scale=0.55]{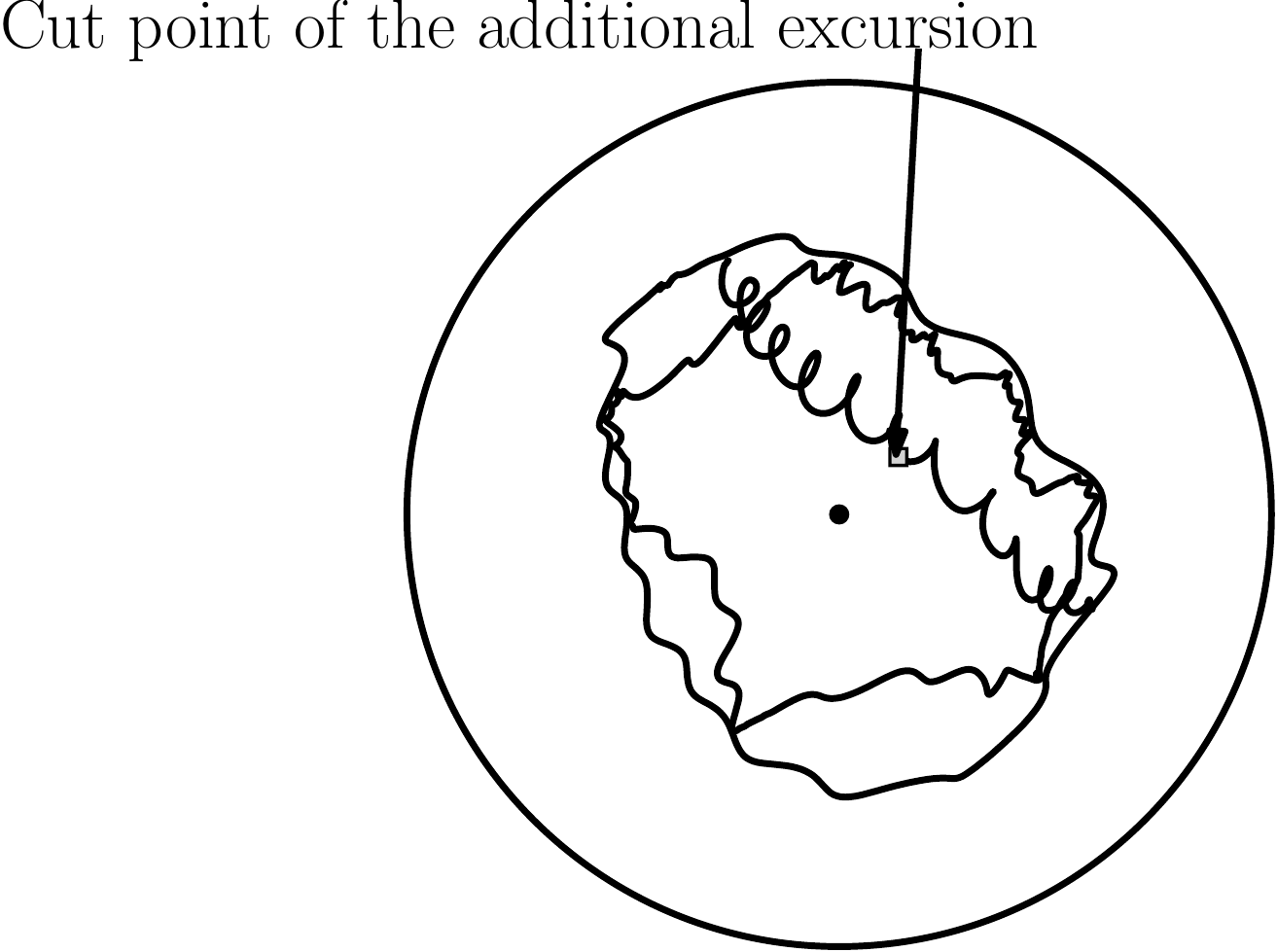}
\end{center}
\caption{\label{cutpoints} (a) The inner and outer boundary of the cluster and the cut points. (b) Adding an excursion that causes a topological contradiction}
\end{figure}
But if one adds to this picture a single excursion (and this can happen, when one resamples a subset of the Poisson point process of excursions) with a cut-point in its the middle, as in Figure \ref {cutpoints}, then this additional excursion would have to belong to a Brownian loop, and therefore also passes through some of the same cut points as $l$ was passing through. This leads to a contradiction because the loop that goes around the whole set of cut points cannot exist any more, and the cut points on the other side are on no loop, so that the cluster is not a cluster any more. 
Hence, this shows that the inside boundary of a $c=1$ loop-soup cluster does not touch its outer boundary, and henceforth that $\beta \ge 1/4$. 

\subsection {Further comments on cut points}
As we have already mentioned,  when adding a restriction measure with exponent $1/4$ to a loop-soup with intensity $c=1$, one can reconstruct exactly a SLE$_4$ (see \cite {WW2}). This result is still valid for other values of $c$ (choosing $\kappa = \kappa (c)$ and $\beta(c) = ( 6 - \kappa) / 2 \kappa$, see \cite {WW2}), which raises naturally the question whether the last statement of the theorem could actually be generalized to other $c$'s as well (with an appropriate choice of intensity for the Poisson point process of excursions, depending on $c$) 
and whether it is only our method of proof via the GFF that 
does not extend to the general case $c<1$. Let us now informally explain why we believe that this decomposition with a Poisson point process of excursions
is in fact specific to $c=1$.

\begin{figure}[ht!]
\begin{center}
\includegraphics[scale=0.5]{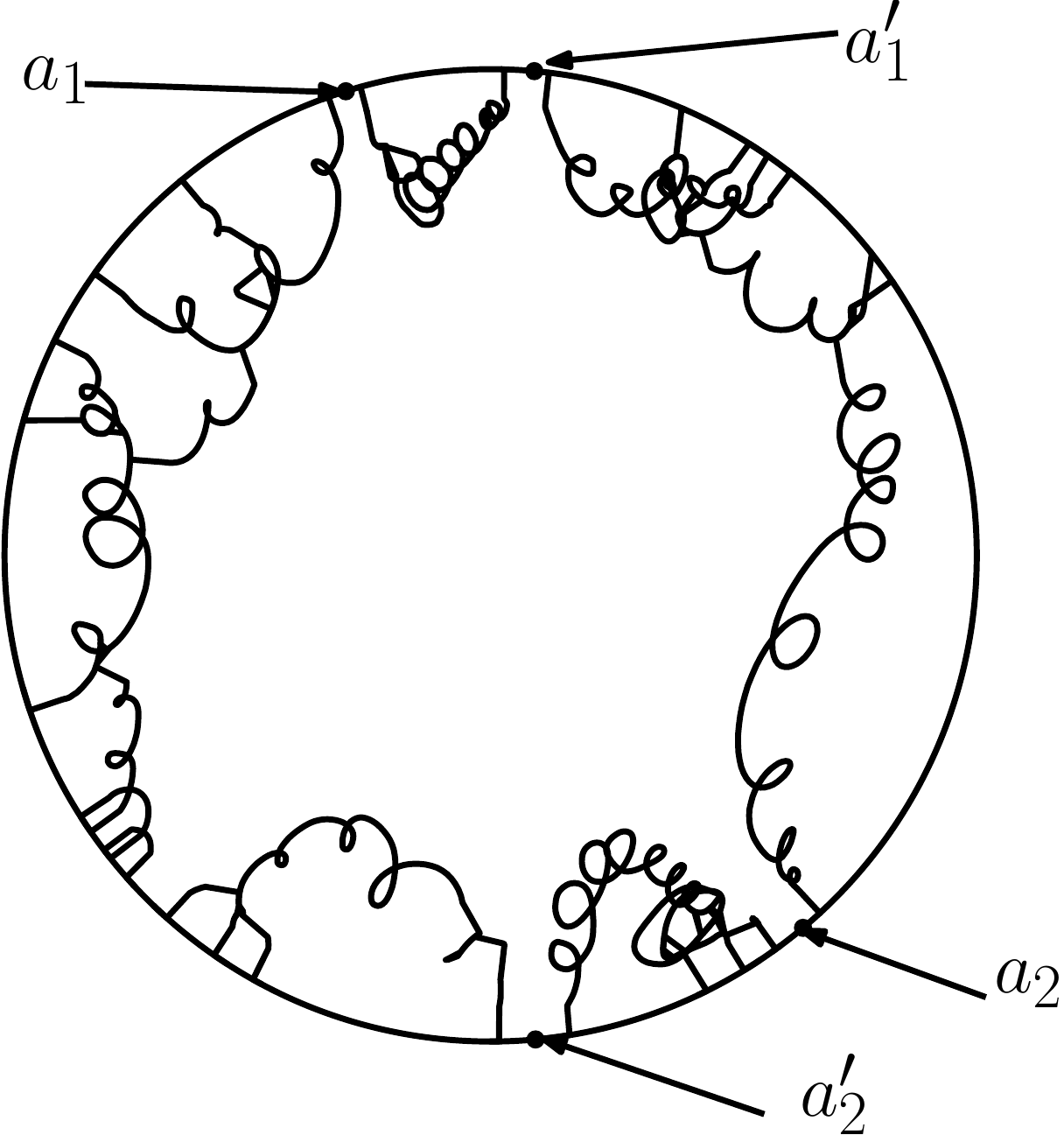}
\includegraphics[scale=0.5]{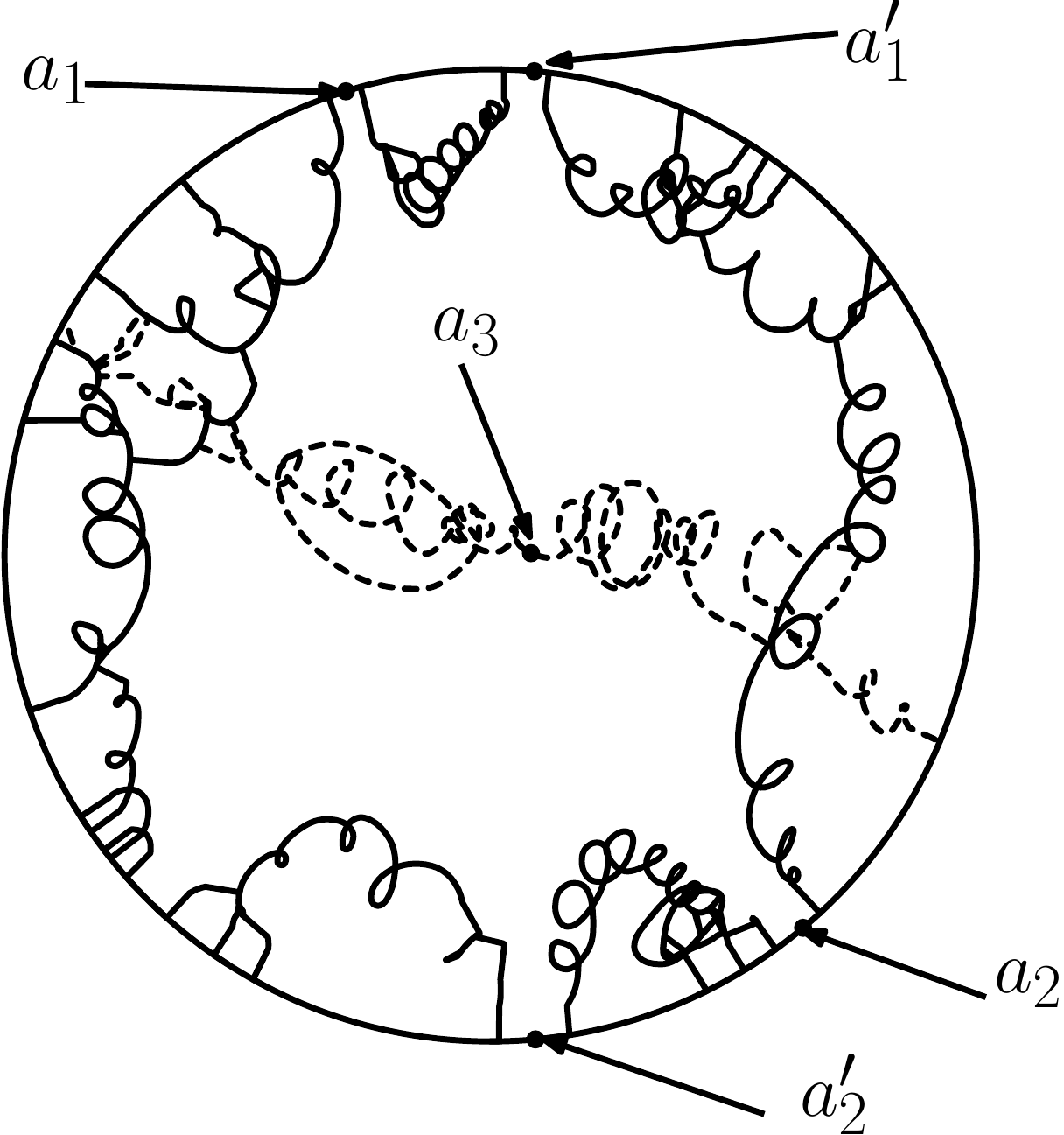}
\end{center}
\caption{\label{fig1}Two possible configuration with cut points.}
\end{figure}

But before this, we will make some comments on the $c=1$ case, in the same spirit as in our identification of the value of $\beta$, but focusing on the 
Poisson point process of excursions only (and not on the entire cluster and its boundary). 
Let us first note that when $\beta=1/4$, the inner boundary (i.e. the boundary of the connected component of its complement that contains the origin) of the union of all the Brownian excursions does touch the unit circle with positive probability. This comes from the fact that a Poisson point process of excursions in $\U$ restricted to those excursions that start and end on a half-circle, defines a one-sided restriction measure with exponent $1/4$ which is smaller than $1/3$ (which is known to be the critical value for its boundary touching this half-circle, see \cite {LSWr,Wcrrq}).  In other words,  the union of 
the excursions is not necessarily connected. This implies in particular that the loops of $\Lambda_0^b$ alone do indeed not form a connected cluster. It is only when adding the loops of $\Lambda_0^i$ that one (almost surely, when one conditions on $\gamma$ and $\Lambda_0^b)$ connects them into a cluster (but we have seen that this happens with probability one). 
This can be easily seen as follows: With positive probability, the configuration of $\Lambda_0^b$ creates four ``cut points'' $a_1$, $a_1'$, $a_2$ and $a_2'$ seen from the origin that separate $-1$ from $1$ on the 
upper half-circle and on the lower half-circle respectively as in Figure \ref {fig1} (this is due to the possible existence of cut points on the aforementioned one-sided restriction measure samples). Note that if the loops of $\Lambda_0^b$ alone would form a connected cluster, then (because Brownian loops can not pass twice through local cut points, see \cite {BL}), the points $a_1$, $a_1'$, $a_2$ and $a_2'$ are all visited by the same Brownian loop (and in cyclic order). 

But, it is also possible (with positive probability) to have also on top of the previous picture,  an additional excursion (and just one), with a cut point $a_3$ in its middle as represented in dashed in Figure \ref {fig1}, that joins the neighbourhood of $1$ to the neighbourhood of $-1$. 
In such a case (again because we know that a Brownian loop has no double points that are also cut points, see \cite {BL}), it follows from elementary topology that the cut points $a_1$ and $a_2$ can not simultaneously be local cut points of the same loop. In fact, exactly one of the two points will be a local cut point of a loop but not the other one (otherwise, there is no way in which the excursion that $a_3$ is part of can be closed into a loop). This implies readily that with positive probability, the loops of $\Lambda_0^b$ alone do not create a unique cluster of loops (because either the part between $a_1$ and $a_1'$, or the part between $a_2$ and $a_2'$ is disconnected from the other part). A simple $0-1$ law argument then implies that this is almost surely the case. Similarly, we see that some local cut points of Brownian loops remain local cut-points of the closure of $\tilde \Lambda^b$. 
This all indicates that the way in which one tries to reconstruct the loops out of the union of the excursions is a rather tricky and non-local procedure. The fact whether a local cut point of $K$ is on a loop will be correlated with the existence of some other excursions and cut-points far away.  

This type of argument shows that the Poissonian decomposition of the inside part of cluster cannot hold in the $c \to 0$ limit. Indeed, the interior of a Brownian loop when conditioned on its outer boundary is {\em not} distributed like a Poisson point process of Brownian excursions with some intensity. If this would have been the case, such a process would also have created local cut-points (because the loop has local cut-points), and the same topological construction as in Figure \ref {fig1}  would not be possible to topologically correspond to a single Brownian excursion (because a Brownian motion almost surely has no double cut points). 

\subsection {Relation to Markovian resampling of part the loop-soup}
The apparent wonder that leads to decomposition into excursions can be related and enlightened by the resampling property of the $c=1$ loop-soup pointed out in \cite {Wresampling,CL}. Let us give an informal description of this: Suppose that one ``discovers'' the loop $\gamma$ from its outside, and then explores in 
both directions all pieces of loops that touch $\gamma$, up to the points where  their image under a given conformal transformation from the 
interior of $\gamma$ into the unit disc reaches distance $\epsilon$ from the unit circle. For each $\epsilon$, only finitely many such ``beads'' do reach distance $\eps$, and then, the way to complete them is described in \cite {Wresampling} and it is basically a Poisson point process of bridges with given set of endpoints. When $\eps$ tends to zero, the bridges become excursions and the set of endpoints becomes closer to a multiple of the Lebesgue measure on the circle (when appropriately renormalized), so that  it is not that surprising to obtain a Poisson point process of excursions in the limit (though making this argument rigorous seems non-trivial).  Recall that this resampling property is very specific to this $c=1$ case.

\begin{figure}[ht!]
\begin{center}
\includegraphics[scale=0.60]{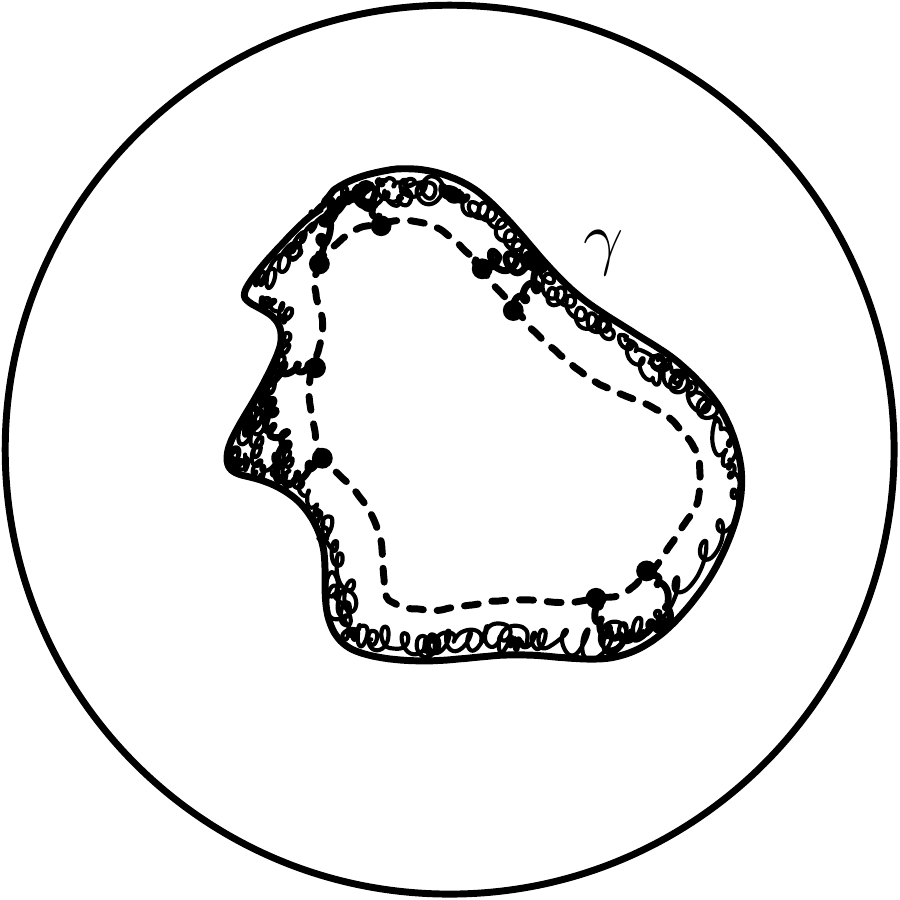}
\includegraphics[scale=0.60]{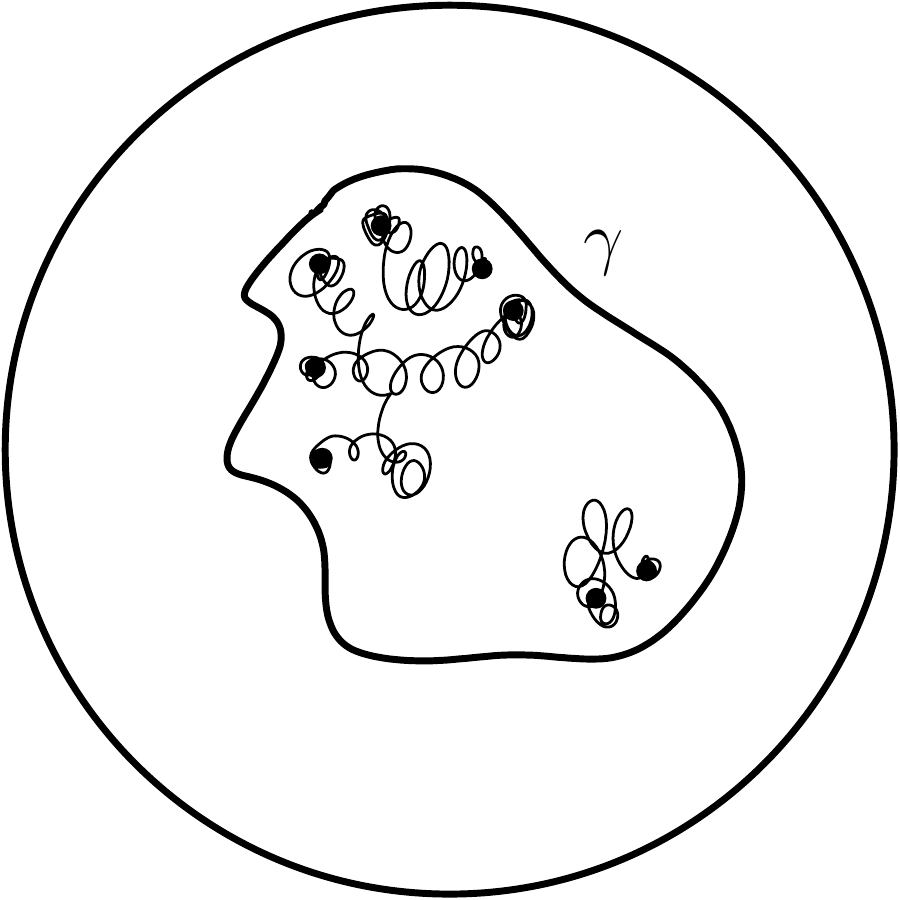}
\end{center}
\caption{\label{beads}(a) Discovering first $\gamma$ and the four beads. (b) Then sampling the remaining bridges}
\end{figure}

This approach actually suggests the following generalization of our first theorem to the multiply connected case. For instance, the doubly connected description would go as follows. 
Consider a $c=1$ Brownian loop-soup in the unit disc $\U$, and suppose that we explore the nested clusters that surround the origin, from the outside to the inside, and stop when discovering the $N$-th nested outer boundary that we call $\eta_1$ ($N$ might be random, as long as the decision when to stop the discovery is done in a Markovian way). We now fix some very small $\eps$ and explore the loop-soup clusters that surround the disc of radius $\eps$ from inside to outside, until we discover the $N'$-th one (again, $N'$ might be random), and we then call $\eta_2$ its inner boundary (mind that this $\eta_2$ might not exist, for instance if no cluster surrounds the origin). Then, the question is, on the event where $\eta_1$ surrounds $\eta_2$, how to describe the loop-soup in the annular region in-between $\eta_1$ and $\eta_2$.

Then, the arguments leading to the appropriate generalization of the first two items of Theorem \ref {mainthm} can be generalized fairly easily. As for the description of the $c=1$ excursion decomposition generalization, the observations in 
\cite {Wresampling} indicate that it should go as follows: The conditional distribution of the union of the excursions away from $\eta_1 \cup \eta_2$ of the loops that touch one or both of these loops is that of a  Poisson point process of excursions of intensity $1/4$ away from the boundary of this annular region, but conditioned   by the event that the  number of excursions that have exactly one end-point on each of $\eta_1$ and $\eta_2$ is even. 

Note that in order for $\eta_2$ to be the inner boundary of the connected component that $\eta_1$ is the outer boundary of, either there exist  excursions (and therefore at least one loop) that touch both, or there is none, but the loops of the loop-soup in the annular region between $\eta_1$ and $\eta_2$ do connect one excursion away from $\eta_1$ to one excursion away from $\eta_2$ (this is when $\eta_1$ and $\eta_2$ are only connected by a chain of loops).

\subsection {A few questions}
Our results provide some clarification about the link between the couplings between the GFF, CLE$_4$, loop-soups and their decompositions, but they do not provide answers to  all natural questions about what 
information the various constructions do provide. Let us list a few of them (we plan to address some of these in upcoming work):
\begin {itemize}
 \item We have proved that the trace (and occupation time measures) of the union of the  Brownian excursions in the Poisson point process do coincide in distribution with that of the excursions away from the outer boundary of clusters by the loops in the loop soup. It is of course natural to expect that one can actually say that the excursions away from the boundary of clusters by the loops in the loop-soup form a Poisson point process of excursions. We plan to 
 derive this in upcoming work, 
 using the previously mentioned Markovian resampling ideas in \cite {Wresampling}.
 \item Suppose that one sees all excursions away from the outer boundary of a loop-soup cluster made by the loops that intersect this boundary. What additional randomness is required in order to decide how to glue the excursions together in order to recover all the loops? Could it be that just one fair coin-toss is needed in order to decide all these connections at once? 
 \item Is the non-labelled simple non-nested CLE$_4$ in the previously described coupling a deterministic function of $\phii$? In other words, can one recover the loop-soup cluster outermost boundaries by just observing the field $\phii$? Another related question is to describe all ways to couple two Gaussian free fields in such a way that their squares are identical.  
 \item What is the conditional distribution of the set of loops that touch the boundary of the clusters, conditionally on the squared GFF in the cluster (clearly, it is not deterministic because of the resampling issues)? In other words, how does one ``separate randomly'' the excursions and the soup, when one observes their union?
\end {itemize}

\bigbreak
\noindent
{\bf Acknowledgements.}
We acknowledge support of the SNF grant SNF-155922 and of the Clay foundation, as well as the hospitality of the Isaac Newton Institute in Cambridge where part of the present work has been carried out. The authors are also part of the NCCR Swissmap. 
We also thank  David Wilson for inspiring discussions, and the referees for their comments.


\begin{thebibliography}{99}

\bibitem {ASW}
J. Aru, A. Sepulveda and W. Werner.
On bounded-type thin local sets (BTLS) of the two-dimensional Gaussian free field.
{\em J. Inst. Math. Jussieu}, to appear. 

\bibitem {Beffara}
V. Beffara. The dimension of the SLE curves.
{\em Ann. Probab.}, 36: 1421-1452, 2008.


\bibitem {BL}
K. Burdzy and G.F. Lawler. 
Nonintersection Exponents for Brownian Paths. II. Estimates and Applications to a Random Fractal.
  {\em  Ann. Probab.},  18: 981-1009, 1990.


\bibitem {CL}
F. Camia and M. Lis. 
Non-backtracking loop soups and statistical mechanics on spin networks. 
{\em Ann. Henri Poincar\'e}, 18: 403-433, 2017.

\bibitem{Dub}
J.~Dub\'edat.
\newblock {SLE and the free field: partition functions and couplings}.
\newblock {\em J. Amer. Math. Soc.}, 22: 995-1054, 2009.

\bibitem {KW}
A. Kemppainen and W. Werner.
The nested conformal loop ensembles in the Riemann sphere. 
{\em Probab. Th. rel. Fields}, 165: 835--866, 2016.

\bibitem{LSWr}
G.F.~Lawler, O.~Schramm, and W.~Werner.
\newblock {Conformal restriction: the chordal case}.
\newblock {\em J. Amer. Math. Soc.}, 16: 917--955, 2003.

\bibitem{LSW2}
G.F. Lawler, O.~Schramm, and W.~Werner.
\newblock {Values of Brownian intersection exponents. II. Plane exponents}.
\newblock {\em Acta Math.}, 187(2): 275--308, 2001.

\bibitem{LTF}
G.F. Lawler and J. Trujillo-Ferreras.
Random walk loop soup.
{\em Trans. Amer. Math. Soc.}, 359: 767-787 (2007). 

\bibitem {LW1}
G.F. Lawler and W. Werner. 
Intersection Exponents for Planar Brownian Motion.
{\em Ann. Probab.}, 27: 1601-1642, 1999.

\bibitem {LW2}
G.F. Lawler and W. Werner,
{ Universality for conformally invariant
intersection exponents}.
{\em J. Europ. Math. Soc.}, { 2}: 291-328, 2000.


\bibitem{LW}
G.F. Lawler and W.~Werner.
\newblock {The Brownian loop soup}.
\newblock {\em Probab. Th. rel. Fields}, 128: 565--588, 2004.

\bibitem {LG}
J.-F. Le Gall. 
Some properties of planar Brownian motion.
In
Cours de l'\'ecole de probabilit\'es de St-Flour XX, 
L.N. in Math. 1527: 111-229, Springer, 1993.

\bibitem {LJ}
Y. Le Jan.
Markov Paths, Loops and Fields.
Cours de l'\'ecole de probabilit\'es de St-Flour XXXVIII, 
L.N. in Math 2026, Springer, 2011.

\bibitem {Titus0}
T. Lupu.
    From loop clusters and random interlacements to the free field.
{\em Ann. Probab.}, 
 44: 2117-2146, 2016.


\bibitem {Titus}
T. Lupu.
    Convergence of the two-dimensional random walk loop soup clusters to CLE.
  {\em  J. Europ. Math. Soc.}, to appear.

    \bibitem {LupuW}
    T. Lupu, W. Werner.
    A note on Ising, random currents, Ising-FK, loop-soups and the GFF. 
   {\em  Electr. Comm. Probab.}, 21, paper no. 13, 2016. 
    
\bibitem {MS}
J.P. Miller and S. Sheffield, 
private communication (2010). 


\bibitem {MS1} J.P. Miller and S. Sheffield.
Imaginary Geometry I. Interacting SLEs, 
{\em Probab. Th. rel. Fields}, 64: 553--705, 2016.

\bibitem {MS2} J.P. Miller and S. Sheffield.
Imaginary Geometry II.  Reversibility of SLE$_\kappa (\rho_1;\rho_2)$ for $\kappa \in (0,4)$, 
{\em  Ann. Probab.}, 44: 1647-1722, 2016.


\bibitem {MS3} J.P. Miller and S. Sheffield.
Imaginary Geometry III.  Reversibility of SLE$_\kappa$ for $\kappa \in (4,8)$.
{\em Ann. Math.}, 184: 455-486, 2016.

\bibitem {MSW} J.P. Miller, S. Sheffield and W. Werner. 
CLE percolations. 
Preprint, arXiv:1602.03884. 

\bibitem {MWW}
J.P. Miller, S.S. Watson and D.B. Wilson.
Extreme nesting in the conformal loop ensemble. 
 {\em Ann. Probab.}, 44: 1013-1052, 2016.
 
\bibitem{NW}
{\c{S}}.~Nacu and W.~Werner.
\newblock {Random soups, carpets and fractal dimensions}.
\newblock {\em J. London Math. Soc.},
  83: 789--809, 2011.

 
  \bibitem {RS}
  S. Rohde and O. Schramm.
   Basic properties of SLE.
   {\em Ann. Math.}, 161: 883-924, 2005.
   
\bibitem {Sch} O. Schramm.
\newblock {
Scaling limits of loop-erased random walks and uniform spanning trees}.
\newblock {\em Israel J. Math.}, 118: 221--288, 2000.
  
\bibitem{SchSh}
O.~Schramm and S.~Sheffield.
\newblock {Contour lines of the two-dimensional discrete Gaussian free field}.
\newblock {\em Acta Math.}, 202: 21--137, 2009.


\bibitem{SchSh2}
O.~Schramm and S.~Sheffield.
     A contour line of the continuous Gaussian free field. 
     {\em Probab. Th. rel. Fields}, 157: 47-80, 2013.
     

\bibitem{SSW}
O.~Schramm, S.~Sheffield, and D.~B. Wilson.
\newblock {Conformal radii for conformal loop ensembles}.
\newblock {\em Comm. Math. Phys.}, 288: 43--53, 2009.

\bibitem{Sh}
S.~Sheffield.
\newblock {Exploration trees and conformal loop ensembles}.
\newblock {\em Duke Math. J.}, 147: 79--129, 2009.

\bibitem {SWW}
S. Sheffield, S.S. Watson, H. Wu.
\newblock {In preparation.}


\bibitem{ShW}
S.~Sheffield and W.~Werner.
\newblock {Conformal Loop Ensembles: The Markovian characterization and the
  loop-soup construction}.
\newblock {\em Ann. Math.}, 176: 1827--1917, 2012.

\bibitem {Sz} 
A.-S. Sznitman. 
An isomorphism theorem for random interlacements, 
{\em Electron. Comm. Probab.} 17, paper no.9: 1-9, 2012.

\bibitem {Sz2}
A.-S. Sznitman. 
On scaling limits and Brownian interlacements.
{\em Bull. Braz. Math. Soc.}  44: 555-592,  2013.


\bibitem{Wcras}
W.~Werner.
\newblock {SLEs as boundaries of clusters of Brownian loops}.
\newblock {\em Comptes Rendus Math.. Acad. Sci.
  Paris}, 337: 481--486, 2003.

\bibitem {Wcrrq}
W. Werner. Conformal restriction and related questions. 
{\em Probab. Surveys}, 2: 145-190, 2005.


\bibitem{Wln}
W.~Werner.
\newblock {Some recent aspects of random conformally invariant systems}.
\newblock Ecole d'\'et\'e de physique des Houches LXXXIII:  57--99, 2006.


\bibitem{W}
W.~Werner.
\newblock {The conformally invariant measure on self-avoiding loops}.
\newblock {\em J. Amer. Math. Soc.}, 21(1):137--169, 2008.

\bibitem{Wgff}
W. Werner. 
\newblock {Topics on the Gaussian Free Field and CLE(4)}.
Lecture Notes, 2015.

\bibitem {Wresampling} 
W.~Werner.
On the spatial Markov property of soups of oriented and unoriented loops.
 in {\em S\'em. Probabilit\'es XLVIII}, L.N. in Math. 2168, Springer, 481-503, 2016.
 
\bibitem {WW}
W. Werner and H. Wu. 
\newblock {On conformally invariant CLE explorations}.
\newblock {\em Comm. Math. Phys.} 320: 637--661, 2013.


\bibitem {WW2}
W. Werner and H. Wu. 
\newblock {From CLE($\kappa$) to SLE($\kappa, \rho$). }
\newblock {\em Electr. J. Probability} 18, paper 36, 2013.

\bibitem {Wilson}
D.B. Wilson, private communication (2014).

\end{thebibliography}
\end{document}